\DeclareMathOperator{\vnone}{V_\textup{\textrm{none}}}
\DeclareMathOperator{\vSteep}{V_\textup{\textrm{steep}}}
\DeclareMathOperator{\vGradual}{V_\textup{\textrm{gradual}}}
\DeclareMathOperator{\vinfty}{V_\textup{\textrm{diverg}}}
\DeclareMathOperator{\vstab}{V_\textup{\textrm{stab}}}
\DeclareMathOperator{\vcent}{V_\textup{\textrm{cent}}}
\title{Retracting fronts for the nonlinear complex heat equation}
\author{Guillaume \textsc{Réocreux}}
\author{Emmanuel \textsc{Risler}\footnote{\protect\url{\myWebPage}}}
\affil{\myAffiliation}
\begin{document}
\maketitle
\thispagestyle{empty}
\begin{abstract}
The \emph{nonlinear complex heat equation} $A_t=i\abs{A}^2A+A_{xx}$ was introduced by P.~Coullet and L.~Kramer as a model equation exhibiting travelling fronts induced by non-variational effects, called \emph{retracting fronts}. In this paper we study the existence of such fronts. They go by one-parameter families, bounded at one end by the slowest and ``steepest'' front among the family, a situation presenting striking analogies with front propagation into unstable states. 
\end{abstract}
\section{Introduction}
The \emph{nonlinear complex heat equation}
\begin{equation}
\label{nlche}
A_t=i\abs{A}^2A+A_{xx}
\,,
\end{equation}
where time variable $t$ and space variable $x$ are real and amplitude $A(x,t)$ is complex, is a model equation combining a purely dispersive nonlinearity and a purely diffusive coupling. It appears as a particular limit of complex Ginzburg--Landau equations, and displays the peculiar feature that there is no characteristic scale for the complex amplitude $A$ (that is, equation is scale-invariant, see \cref{scale_inv} below). 

Despite its simplicity, this equation displays a nontrivial dynamical behaviour. It was introduced by P.~Coullet and L.~Kramer \cite{CoulletKramer_retractingFronts_2004} as a model equation exhibiting travelling fronts induced by non-variational effects, called ``retracting fronts''. Indeed, consider an initial condition connecting the state $A=0$ to a homogeneous oscillatory state. The effect of the coupling term is to smooth the interface (without the nonlinear term one would observe a self-similar repair), but the nonlinear term generates a phase gradient that pushes the interface in favour of the zero-amplitude state. Numerically, the solution converges towards a travelling front that balances these effects of diffusion and nonlinearity \cite{CoulletKramer_retractingFronts_2004}. When a small perturbation is added, rending, on one hand the zero-amplitude state slightly unstable, and on the other hand homogeneous perturbation at a certain finite amplitude stable, then the presence of these retracting fronts induces spatio-temporal intermittency \cite{CoulletKramer_retractingFronts_2004}. As a matter of fact the initial motivation of Coullet and Kramer to introduce a model equation displaying this phenomenon was an experiment of Rayleigh-Bénard convection with rotation at small Prandtl numbers, for which intermittend convection without hysteresis was observed \cite{AhlersBajajPesch_RayleighBenardConvRotSmallPrandtlNumbers_2002}.

The aim of this work is to investigate the existence of these retracting fronts for the model equation \cref{nlche}. 
\section{Main result}
\label{sec:main_res}
The spatially homogeneous solutions of equation \cref{nlche} are the ``trivial'' solution
\[
A(x,t)\equiv 0
\,,
\]
and, for every pair $(a,\varphi)$ of real quantities, the spatially  homogeneous time-periodic solution
\begin{equation}
\label{spat_hom}
A(x,t)= a \exp\bigl(i a^2t+\varphi\bigr)
\,.
\end{equation}
A larger class of particular solutions are uniformly translating and rotating solutions. For sake of clarity, let us state a formal definition. 
\begin{definition}[uniformly translating and rotating solution]
A solution $(x,t)\mapsto A(x,t)$ of equation \cref{nlche} is called a \emph{uniformly translating and rotating solution} if there exist real quantities $v$ and $\omega$ and a smooth function $B:\rr\rightarrow\cc$ such that, for every $(x,t)$ in $\rr^2$, 
\begin{equation}
\label{ansatz}
A(x,t)=B(x-vt)e^{i\omega t} 
\,.
\end{equation}
\end{definition}
Our target is the more specific class of particular solutions defined immediately below, and illustrated on \cref{fig:shape_front}.
\begin{figure}[!htbp]
	\centering
    \includegraphics[width=\textwidth]{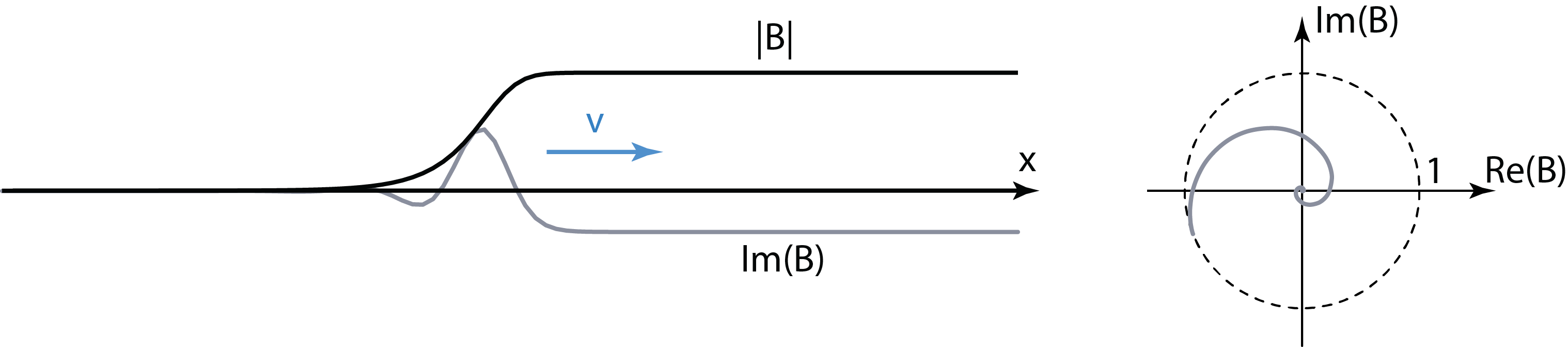}
    \caption{Numerical computation of ``the'' retracting front (courtesy of Pierre Coullet).}
    \label{fig:shape_front}
\end{figure}
\begin{definition}[retracting front]
Let $a$ denote a positive quantity. A solution $(x,t)\mapsto A(x,t)$ of equation \cref{nlche} is called a \emph{retracting front of amplitude $a$} if there exist a positive quantity $v$ (velocity) and a smooth function $B:\rr\rightarrow\cc$ such that, for every $(x,t)$ in $\rr^2$, 
\[
A(x,t)=B(x-vt)e^{ia^2 t} 
\]
and such that
\[
B(\xi) \rightarrow 0
\quad{when}\quad
\xi \rightarrow -\infty
\quad{and}\quad
|B(\xi)| \rightarrow a
\quad{when}\quad
\xi \rightarrow +\infty
\,.
\]
\end{definition}
Note that in this definition only fronts travelling ``to the right'' are considered. Of course this is an arbitrary choice, since for every such retracting front, space reversibility $x\rightarrow -x$ yields the existence of a symmetric travelling front travelling ``to the left'', with similar properties. 

Relevant uniformly translating and rotating  solutions of equation \cref{nlche} actually reduce to retracting fronts, as stated by the following proposition. 
\begin{proposition}[retracting fronts are the only inhomogeneous bounded uniformly translating and rotating solutions]
\label{prop:only_ret_fr}
Every uniformly translating and rotating solution of equation \cref{nlche} with a bounded amplitude is either a spatially homogeneous solution of the form \cref{spat_hom} or a retracting front (or the image of the retracting front by the $x\rightarrow -x$ symmetry).  
\end{proposition}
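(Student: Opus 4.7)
The plan is to substitute the ansatz $A(x,t)=B(x-vt)e^{i\omega t}$ into \cref{nlche}, reducing the question to the classification of the bounded solutions of the autonomous second-order complex ODE
\begin{equation}
\label{eq:odeB}
B''+vB'+i\bigl(|B|^2-\omega\bigr)B=0,
\end{equation}
and then to exploit a Lyapunov-like monotonicity identity for $|B|^2$. The $x\mapsto -x$ symmetry of \cref{nlche} changes $v$ into $-v$, so it suffices to treat the case $v\geq 0$.

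The starting point is a short Gronwall-type argument on \cref{eq:odeB}, showing that if $B$ is bounded on $\rr$ then $B'$ and $B''$ are bounded as well. Multiplying \cref{eq:odeB} by $\overline{B}$ and taking real parts yields the key identity
\begin{equation}
\label{eq:key}
\bigl(|B|^2\bigr)''+v\bigl(|B|^2\bigr)'=2|B'|^2.
\end{equation}
When $v=0$, \cref{eq:key} states that $|B|^2$ is convex, hence constant since it is also bounded, which forces $B'\equiv 0$; the ansatz then yields either the trivial solution or a spatially homogeneous solution \cref{spat_hom} with $a^2=\omega$.

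When $v>0$, \cref{eq:key} rewrites as $\bigl(e^{v\xi}(|B|^2)'\bigr)'\geq 0$, so $\xi\mapsto e^{v\xi}(|B|^2)'(\xi)$ is non-decreasing on $\rr$. Since $(|B|^2)'$ is bounded, this quantity vanishes at $-\infty$ and hence remains non-negative throughout, so $|B|^2$ itself is non-decreasing on $\rr$. Consequently $|B|^2$ admits limits $r_-\leq r_+$ at $\mp\infty$, and Barbalat's lemma (applicable because $(|B|^2)''$ is bounded) yields $(|B|^2)'\to 0$ at both ends.

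The final step is to identify $r_\pm$ with equilibrium values of \cref{eq:odeB}. The $\omega$- and $\alpha$-limit sets of $\xi\mapsto(B(\xi),B'(\xi))$ in $\cc^2$ are non-empty, compact, connected and invariant under the flow; along each of their orbits $|B|^2$ is constant---equal to $r_+$, respectively $r_-$---which forces $\operatorname{Re}(\overline{B}B')=0$ and therefore $B'=i\theta'B$ for some real-valued function $\theta'$. Substituting this form back into \cref{eq:odeB} and separating real and imaginary parts forces $\theta'\equiv 0$ and $|B|^2\in\{0,\omega\}$, so that $r_\pm\in\{0,\omega\}$. Combined with $r_-\leq r_+$, this leaves only the three possibilities $(r_-,r_+)\in\{(0,0),(\omega,\omega),(0,\omega)\}$, yielding respectively the trivial solution, a spatially homogeneous solution \cref{spat_hom} with $a^2=\omega$, and a retracting front of amplitude $\sqrt\omega$. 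I expect this final limit-set argument---which converts the monotonicity coming from \cref{eq:key} into a classification of $r_\pm$ as equilibrium values---to be the most delicate step of the proof.
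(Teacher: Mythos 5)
Your argument is correct, but it takes a genuinely different route from the paper. The paper obtains this proposition as a by-product of its phase-space analysis in the polar variables $(a,\kappa,q)$: monotonicity of the amplitude via the ``mechanical'' equation \cref{2nd_order_a} (\cref{lem:a_mon}), a separate exclusion of solutions with vanishing amplitude (\cref{lem:vanishing}), the asymptotics at $+\infty$ forcing $\omega>0$ and $a\to\sqrt{\omega}$ (\cref{lem:om_pos}), and the identification of the $\alpha$-limit with the equilibrium $z_+(v)$ through the M\"obius change of variables (\cref{lem:unst_man}). You instead stay in Cartesian variables and rest everything on the single identity $\bigl(\abs{B}^2\bigr)''+v\bigl(\abs{B}^2\bigr)'=2\abs{B'}^2$, which is exactly the integrated form of \cref{2nd_order_a} but has the advantage of remaining valid where $B$ vanishes, so you never need a separate vanishing-amplitude lemma; the integrating factor $e^{v\xi}$ then gives monotonicity of $\abs{B}^2$, and a LaSalle-type limit-set argument pins the limits $r_\pm$ to equilibrium values. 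This yields a shorter, more self-contained proof of the proposition, at the price of producing none of the extra structure the paper extracts and reuses for \cref{thm} (strict monotonicity of amplitude and phase, convergence to $z_+(v)$ at $-\infty$, i.e.\ membership in its unstable manifold, and the stable-versus-center-manifold dichotomy governing the rates at $+\infty$). Two points deserve to be written out, though neither is a real gap: the boundedness of $B'$ on all of $\rr$ is not a plain forward Gronwall estimate --- use either variation of constants for the first-order equation satisfied by $B'$ together with the boundedness of $B$ to kill the backward-growing mode, or a Landau-type interpolation inequality on windows of fixed length; and on the limit sets the step $B'=i\theta'B$ presupposes $r_\pm>0$ (the case $r_\pm=0$ being trivially one of the allowed values), or more directly one can note that constancy of $\abs{B}^2$ on the limit orbit plugged into your identity forces $B'\equiv0$ there, which identifies the limit points as equilibria at once.
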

This proposition will be proved along the proof of the main statement of this paper (\cref{thm} below); to be more precise \cref{prop:only_ret_fr} follows from  \cref{lem:a_mon,lem:vanishing,lem:om_pos,lem:unst_man}, proved in \cref{sec:monotonic,sec:vanishing,sec:plus_infty,sec:minus_infty}, respectively. 

Besides space reversibility and the obvious symmetries that are time translation $t\rightarrow t + c$, space translation $x\rightarrow x + c$, and rotation (phase translation) of the complex amplitude $A\rightarrow A e^{i\varphi}$, equation \cref{nlche} displays, for every $\lambda$ in $(0;+\infty)$, the following scale-invariance symmetry: 
\begin{equation}
\label{scale_inv}
(A,t,x)\longrightarrow\Bigl(\lambda A, \frac{t}{\lambda^2} , \frac{x}{\lambda} \Bigr)
\,;
\end{equation}
in particular there is no characteristic scale for the modulus $\abs{A}$ of the amplitude. Because of these symmetries, retracting fronts defined above go by three-parameter families: one parameter for space-time translation, one for rotation (phase translation) of the complex amplitude, and one for the scale invariance \cref{scale_inv}. Due to this scale invariance, we shall without loss of generality restrict ourselves in some of the next statements to retracting fronts of amplitude $1$ (those go by two-parameter families --- space-time and phase translations ---instead of three). 

We are going to distinguish two subclasses among those retracting fronts. The next two propositions are preliminary results that will ease the statement of this definition.
\begin{proposition}[the amplitude of a retracting front is nonzero and strictly increasing]
\label{prop:monotonic_amplitude}
For every retracting front of equation \cref{nlche}, the (modulus of the) amplitude is nonzero and strictly increasing on the real line. 
\end{proposition}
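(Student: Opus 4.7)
The natural approach is to reduce everything to an ODE analysis of the profile. Substituting the ansatz $A(x,t) = B(x - vt)e^{ia^2 t}$ into equation \cref{nlche} and writing $\xi = x - vt$, one obtains the complex second-order ODE
\[
B'' + v B' - i\bigl(a^2 - |B|^2\bigr)B = 0.
\]
The natural quantity to track is $r(\xi) := |B(\xi)|^2$; differentiating twice and substituting $B''$ from the ODE, the contribution of $i(a^2 - |B|^2)B$ is purely imaginary when paired with $\bar{B}$ and drops out of $\mathrm{Re}(B''\bar{B})$, leaving the clean identity
\[
r''(\xi) + v\,r'(\xi) = 2|B'(\xi)|^2.
\]
Multiplying by the integrating factor $e^{v\xi}$, this reads $\bigl(e^{v\xi} r'(\xi)\bigr)' = 2 e^{v\xi}|B'(\xi)|^2 \ge 0$, so $e^{v\xi} r'(\xi)$ is nondecreasing on $\rr$.

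I would then integrate from $-\infty$. Since $B(\xi) \to 0$ as $\xi \to -\infty$, a standard analysis of the linearization of the first-order system at the origin (the characteristic equation $\mu^2 + v\mu - i a^2 = 0$ has two roots of nonzero real part and opposite sign) places $B$ on the two-dimensional unstable manifold of $0$ and gives exponential decay of both $B$ and $B'$ at $-\infty$; in particular $e^{v\xi} r'(\xi) \to 0$ in that limit. Integrating the previous identity yields
\[
e^{v\xi} r'(\xi) = \int_{-\infty}^{\xi} 2 e^{v s} |B'(s)|^2 \, ds \ge 0.
\]

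Upgrading to strict positivity is then immediate: if $r'(\xi_0) = 0$ for some finite $\xi_0$, the vanishing of a nonnegative integral forces $B'(s) = 0$ for every $s \le \xi_0$, so $B$ is constant on $(-\infty, \xi_0]$, and the boundary condition $B(-\infty) = 0$ yields $B \equiv 0$ on that half-line. Cauchy--Lipschitz uniqueness for the ODE then propagates this to $B \equiv 0$ on $\rr$, contradicting $|B(\xi)| \to a > 0$ at $+\infty$. Therefore $r'(\xi) > 0$ for every $\xi$, so $r$ (equivalently $|B|$) is strictly increasing on $\rr$; combined with $r(-\infty) = 0$, this forces $r(\xi) > 0$ for every $\xi$, which is the announced statement.

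The only nontrivial obstacle is ensuring that the boundary term $e^{v\xi} r'(\xi)$ vanishes as $\xi \to -\infty$, which amounts to the classical fact that a bounded solution of the above ODE converging to the hyperbolic equilibrium $0$ decays exponentially (presumably supplied by the unstable-manifold lemma cited elsewhere in the paper). Once this ingredient is in hand, the remainder of the proof is a one-line consequence of the sign of the integrand and Cauchy--Lipschitz uniqueness.
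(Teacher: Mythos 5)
Your identity $r''+vr'=2|B'|^2$ for $r=|B|^2$ is correct, and it is in substance the same mechanism the paper uses: in polar coordinates it is exactly \cref{2nd_order_a}, $a''=q^2a-va'$ (take $r=a^2$ and use $|B'|^2=a'^2+q^2a^2$), i.e.\ a damped equation with nonnegative (repulsive) forcing. Working with $|B|^2$ instead of polar coordinates even has the advantage that you need not rule out zeros of $B$ beforehand, which the paper handles separately (\cref{lem:vanishing}). The genuine gap is your treatment of the boundary term at $-\infty$: you claim $e^{v\xi}r'(\xi)\to0$ because ``$B$ lies on the unstable manifold of $0$ and decays exponentially together with $B'$''. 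But the definition of a retracting front only gives $|B(\xi)|\to0$ as $\xi\to-\infty$; it says nothing about $B'$, and convergence of $B$ alone does not place the trajectory $(B,B')$ near the equilibrium $(0,0)$ of the first-order system, let alone on its unstable manifold. Establishing precisely this asymptotic characterization is the nontrivial content of the paper's \cref{lem:unst_man} (proved via the change of variables \cref{z_Z_a_min_inf}, after \cref{lem:a_mon,lem:vanishing}), so as written you are borrowing an ingredient at least as delicate as the statement you are proving.

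The good news is that your framework does not need that ingredient. Since $\bigl(e^{v\xi}r'\bigr)'\ge0$ and $v>0$ for a retracting front, if $r'(\xi_0)<0$ for some $\xi_0$ then $r'(\xi)\le e^{v(\xi_0-\xi)}r'(\xi_0)$ for all $\xi\le\xi_0$, and integrating this from $\xi$ to $\xi_0$ gives $r(\xi)\to+\infty$ as $\xi\to-\infty$, contradicting $r(\xi)\to0$; hence $r'\ge0$ on all of $\rr$ with no decay information on $B'$ whatsoever. For strictness, if $r'(\xi_0)=0$, the monotonicity of $e^{v\xi}r'$ together with $r'\ge0$ forces $r'\equiv0$, hence $B'\equiv0$, on $(-\infty,\xi_0]$; then $B$ is constant there, equal to its limit $0$, and Cauchy--Lipschitz uniqueness for \cref{ode_B} gives $B\equiv0$ on $\rr$, contradicting $|B|\to a>0$ at $+\infty$ --- this is your final step, now made independent of the integral representation. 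Non-vanishing is then immediate: $r(\xi_1)=0$ with $r$ strictly increasing would force $r<0$ on $(-\infty,\xi_1)$, impossible. With this repair your proof is complete and amounts to the paper's argument (\cref{lem:a_mon}, which in addition excludes $v\le0$) recast in the variable $|B|^2$.
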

In other words, the function $\xi\mapsto \abs{B(\xi)}$ is strictly increasing on $\rr$ (and it vanishes nowhere). 
As for \cref{prop:only_ret_fr} above, this proposition will be proved along the proof of the main result of this paper (\cref{thm} below); to be more precise \cref{prop:monotonic_amplitude} follows from \vref{lem:a_mon}.

As a consequence, if $A(x,t)=B(x-vt)e^{it}$ is a retracting front (say of amplitude equal to $1$), the first order variation of the phase of $B$, that is the quantity:
\begin{equation}
\label{first_order_var_phase}
\imm\Bigl(\frac{B'(\xi)}{B(\xi)}\Bigl)
\end{equation}
is defined for all $\xi$ in $\rr$. 
\begin{proposition}[the phase of a retracting front is strictly increasing]
\label{prop:monotonic_phase}
For every retracting front of equation \cref{nlche}, the phase is strictly increasing on the real line. 
\end{proposition}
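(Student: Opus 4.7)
The plan is to work in polar coordinates $B = \rho e^{i\psi}$ --- a decomposition available globally thanks to \cref{prop:monotonic_amplitude} --- and to prove that $\beta := \psi'$ is strictly positive on $\rr$. Substituting the travelling-front ansatz into \cref{nlche} yields the profile equation $B'' + v B' = i\bigl(a^2 - \abs{B}^2\bigr)B$, and, setting $\alpha := \rho'/\rho$, splitting this equation into real and imaginary parts gives the system
\[
\alpha' + \alpha^2 - \beta^2 + v\alpha = 0
\quad\text{and}\quad
\beta' + (v + 2\alpha)\beta = a^2 - \rho^2
\,.
\]
By \cref{prop:monotonic_amplitude}, $0 < \rho < a$ and $\alpha \geq 0$ on $\rr$; combined with $v > 0$, this makes both the coefficient $v + 2\alpha$ and the forcing $a^2 - \rho^2$ of the $\beta$-equation strictly positive everywhere.

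Next, I would establish that $\beta > 0$ near $-\infty$ by linearising the profile ODE at $B = 0$. The characteristic polynomial is $\lambda^2 + v\lambda - ia^2$, and since $v^2 + 4ia^2$ lies in the open upper half-plane with argument in $(0, \pi/2)$, its principal square root has argument in $(0, \pi/4)$; hence the root $\lambda_+$ with positive real part also has strictly positive imaginary part. By \cref{lem:unst_man}, the front lies on the unstable manifold of the origin in the profile phase space, so $B(\xi) \sim C\, e^{\lambda_+ \xi}$ as $\xi \to -\infty$, and consequently $\beta(\xi) \to \imm \lambda_+ > 0$; in particular, $\beta > 0$ on some half-line $(-\infty, \xi_0]$.

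Finally, I would close with a maximum-principle style argument on the linear ODE satisfied by $\beta$. Suppose for contradiction that $\beta$ is not strictly positive on $\rr$, and let $\xi_* := \inf\{\xi \in \rr : \beta(\xi) \leq 0\}$, which is finite by the previous step. By continuity, $\beta(\xi_*) = 0$ and $\beta > 0$ on $(-\infty, \xi_*)$, so evaluating the $\beta$-equation at $\xi_*$ yields
\[
\beta'(\xi_*) = a^2 - \rho(\xi_*)^2 > 0
\,,
\]
which forces $\beta(\xi) < 0$ for $\xi$ slightly less than $\xi_*$, contradicting the definition of $\xi_*$.

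The step I expect to be the main obstacle is the asymptotic analysis at $-\infty$: depending on the precise formulation of \cref{lem:unst_man}, one may need a small extra argument --- for instance a Hartman--Grobman-type linearisation, or a direct verification that $B'/B \to \lambda_+$ --- to rule out lower-order corrections spoiling the sign of $\beta$ and thus guarantee $\beta > 0$ on a full neighbourhood of $-\infty$.
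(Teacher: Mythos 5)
Your proposal is correct and follows essentially the same route as the paper's proof (\cref{lem:q_pos}): positivity of the phase derivative near $-\infty$ coming from the convergence of $\kappa+iq$ to $z_+(v)$, whose imaginary part is positive, combined with the bound $\rho<a$ (amplitude below its limit) and a first-crossing argument on the equation for the phase derivative. The obstacle you flag at $-\infty$ is in fact absent: \cref{lem:unst_man} states directly that $\kappa(\xi)+iq(\xi)\to z_+(v)$ as $\xi\to-\infty$, so $\beta\to\imm z_+(v)>0$ without any refined asymptotics for $B$ itself.
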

In other words, the first order variation \cref{first_order_var_phase} of the phase of $B(\xi)$ is positive for every real quantity $\xi$. As for \cref{prop:only_ret_fr,prop:monotonic_amplitude} above, this proposition will be proved along the proof of the main result of this paper (\cref{thm} below); to be more precise \cref{prop:monotonic_phase} follows from \vref{lem:q_pos}).

Now let us make the announced distinction. 
\begin{definition}
A retracting front $(x,t)\mapsto B(x-vt) e^{it}$ (of amplitude $1$) is said to be:
\begin{enumerate}
\item \emph{steep} if:
\begin{itemize}
\item the rate at which $|B(\xi)|$ approaches $1$ when $\xi$ approaches $+\infty$ is exponential, namely:
\[
1-|B(\xi)| = o_{\xi\rightarrow+\infty}\bigl(e^{-v\xi}\bigr)
\,;
\]
\item and the variation of the phase when $\xi$ approaches $+\infty$ is finite, in other words:
\[
\int_0^{+\infty} \imm\Bigl(\frac{B'(\xi)}{B(\xi)}\Bigl) \, d\xi < +\infty
\,;
\]
\end{itemize}
\item \emph{gradual} if:
\begin{itemize}
\item the rate at which $|B(\xi)|$ approaches $1$ when $\xi$ approaches $+\infty$ is polynomial, namely:
\[
1-|B(\xi)| = \ooo_{\xi\rightarrow+\infty}\Bigl(\frac{1}{\xi}\Bigr)
\,;
\]
\item and the variation of the phase when $\xi$ approaches $+\infty$ is infinite, in other words:
\[
\int_0^{+\infty} \imm\Bigl(\frac{B'(\xi)}{B(\xi)}\Bigl) \, d\xi = +\infty
\,.
\]
\end{itemize}
\end{enumerate}
\end{definition}
This definition presents a striking analogy with front propagation into unstable states (say in the ``pulled'' case), where fronts often go by one-parameter families containing one one hand a single ``pulled'' front propagating at the ``linear spreading velocity'' and attracting localized initial conditions, and on the other hand for every larger velocity a front propagating at this larger ``leading edge dominated'' velocity \cite[p. 70]{VanSaarloos_frontPropagationUnstableStates_2003}, \cite{KolmogorovPetrovskii_diffusionEquation_1937}. Thus the ``steep'' versus ``gradual'' cases defined above may be related to those ``pulled-linear spreading'' versus ``leading edge dominated'' types of unstable fronts, respectively. Here the situation is quite different however: the invaded equilibrium is neutral, not unstable, and it is the phase gradient generated by the nonlinearity combined with the diffusion term that moves the front in favour of the zero-amplitude state. 

Our main result, illustrated by \cref{fig:partition_velocity_line}, is the following. 
\begin{theorem}[one-parameter family of retracting fronts]
\label{thm}
There exists a partition of the real line $\rr$ into three nonempty subsets:
\[
\rr = \vnone\sqcup\vSteep\sqcup\vGradual
\]
such that the following assertions hold. 
\begin{enumerate}
\item For every velocity $v$ in $\vSteep\sqcup\vGradual$, there exists a unique (up to space-time translation and rotation of complex amplitude) retracting front of amplitude $1$ and velocity $v$ for equation \cref{nlche}. This retracting front is:
\begin{itemize}
\item steep if $v$ is in $\vSteep$;
\item gradual if $v$ is in $\vGradual$.
\end{itemize}
\item For every $v$ in $\vnone$, no retracting front of amplitude $1$ and velocity $v$ exists for equation \cref{nlche}. 
\item The sets $\vnone$ and $\vGradual$ are open subsets of $\rr$, and
\[
(-\infty,0]\subset \vnone
\quad\mbox{and}\quad
[2,+\infty)\subset \vGradual
\,.
\]
\end{enumerate}
\end{theorem}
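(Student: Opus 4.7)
The plan is to substitute the ansatz $A(x,t) = B(\xi) e^{it}$ with $\xi = x - vt$ and reduce \cref{nlche} to the traveling wave ODE
\begin{equation*}
B'' + vB' + i\bigl(\abs{B}^2 - 1\bigr) B = 0,
\end{equation*}
or equivalently, writing $B = a e^{i\theta}$ with phase gradient $q = \theta'$, the reduced real system $a'' + va' = aq^2$ and $(a^2 q)' + v a^2 q = a^2(1 - a^2)$ on the 3D quotient by the $U(1)$ phase symmetry. In the 4-dimensional real phase space $(B, B')$, linearization at the origin yields the characteristic polynomial $\lambda^2(v+\lambda)^2 = -1$, whose four complex roots always split as two with positive and two with negative real part; hence the unstable manifold $W^{\mathrm{u}}(0)$ is 2-dimensional for every $v$. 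The symmetry group (time translation and $U(1)$ phase rotation) acts 2-dimensionally on $W^{\mathrm{u}}(0)$, so modulo these symmetries there is a unique trajectory emanating from the origin at $-\infty$; I would shoot it forward in $\xi$. The equilibrium at $(1,0,0)$ in the 3D reduced system has spectrum $\{0, -v, -v\}$ with a Jordan block: trajectories approaching it do so either along the 2D stable manifold at rate $e^{-v\xi}$ (\emph{steep}) or along the 1D center manifold at the polynomial rate $\sim 1/\xi$ (\emph{gradual}). Classifying each $v$ by the asymptotic fate of the shot trajectory yields the partition $\vnone \sqcup \vSteep \sqcup \vGradual$.

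To prove $(-\infty, 0] \subset \vnone$, I would use a short energy identity: writing $P = \abs{B}^2$, the ODE gives $P'' + vP' = 2\abs{B'}^2$ (the nonlinear cross-term cancels against its conjugate because it is purely imaginary); integrating over $\rr$ using $P(\mp\infty) = 0, 1$ and $P'(\pm\infty) = 0$ yields
\begin{equation*}
v = 2 \int_{\rr} \abs{B'}^2 \, d\xi,
\end{equation*}
which is strictly positive for any non-constant front. The uniqueness statement in item~1 is then immediate from the uniqueness (up to symmetries) of the $W^{\mathrm{u}}(0)$ trajectory. Openness of $\vnone$ follows from continuous dependence on the parameter $v$ over a finite segment along which the trajectory escapes any prescribed compact set. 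Openness of $\vGradual$ follows from the structural-stability principle that a trajectory entering the center-stable manifold of $(1,0,0)$ with a non-vanishing center component continues to do so under small perturbation of $v$. Non-emptiness of $\vSteep$ is then immediate: since $\vnone$ and $\vGradual$ are both open and non-empty and $\rr$ is connected, their complement $\vSteep$ cannot be empty.

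The main obstacle is the inclusion $[2, +\infty) \subset \vGradual$. My plan is to construct, for every $v \geq 2$, an explicit positively invariant trapping region in the reduced 3D phase space $(a, a', q)$, along whose boundary the vector field points strictly inward and which funnels trajectories to $(1,0,0)$ along the center direction. A natural candidate is a region of the form $\{0 < a < 1,\ a' > 0,\ \alpha_-(v)(1-a) \leq q \leq \alpha_+(v)(1-a)\}$ with constants $\alpha_\pm(v)$ selected by matching the formal leading-order center-manifold asymptotics $1 - a \sim v^3/(4\xi)$ and $q \sim v^2/(2\xi)$ as $\xi \to +\infty$. One must then verify that the shot $W^{\mathrm{u}}(0)$ trajectory actually enters this region for $v \geq 2$, likely via a shooting/continuation argument exploiting the oscillatory character of $W^{\mathrm{u}}(0)$ near $0$; the critical value $v = 2$ should arise naturally from the compatibility conditions on $\alpha_\pm(v)$ that render the cone invariant.
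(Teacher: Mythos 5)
Your skeleton is essentially the paper's: shoot the (unique modulo phase rotation and translation) trajectory leaving the zero state, classify its forward fate as blow-up versus stable-manifold versus center-manifold approach to $(1,0,0)$, read off steep/gradual from the $e^{-v\xi}$ versus $1/\xi$ asymptotics at the degenerate equilibrium, and get nonemptiness of $\vSteep$ from connectedness of $\rr$. Your identity $v=2\int_{\rr}\abs{B'}^2\,d\xi$ is a genuinely different (and clean) route to $(-\infty,0]\subset\vnone$ --- the paper instead uses the repulsive-force argument of \cref{lem:a_mon} --- though you must still justify the boundary terms $P'(\pm\infty)=0$ for an \emph{arbitrary} front, e.g.\ by integrating over finite intervals and choosing sequences along which $P'\to0$. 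The first genuine gap is that you never prove your classification is exhaustive, nor that it captures all retracting fronts. The definition only requires $B(\xi)\to0$ as $\xi\to-\infty$, so membership of $(B,B')$ in $W^{\mathrm{u}}(0)$ is not automatic; and in forward time you must exclude a fourth fate, namely a bounded trajectory that does not converge to $(1,0,0)$. The paper devotes \cref{lem:a_mon,lem:vanishing,lem:om_pos,lem:unst_man} to exactly this, exploiting equation \cref{2nd_order_a}: for $v>0$ a bounded positive-amplitude solution has strictly increasing amplitude, hence converges to $(\sqrt{\omega},0,0)$ (forcing $\omega>0$), and at $-\infty$ it either emanates from $z_+(v)$ or from a zero of $B$, the latter case being unbounded. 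These facts are also what your openness argument for $\vnone$ silently needs: escaping a prescribed compact set at a finite time does not by itself persist under perturbation of $v$; the paper's criterion is that $a_v(\xi)>1$ at some finite $\xi$ already forces blow-up, which rests on monotonicity. Without this block, items 1 and 2 of the theorem and the openness of $\vnone$ are unproved.

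The second gap is the inclusion $[2,+\infty)\subset\vGradual$, which you rightly call the main obstacle but leave as a plan, and your candidate region $\{0<a<1,\ a'>0,\ \alpha_-(1-a)\le q\le\alpha_+(1-a)\}$ is unlikely to be positively invariant as stated because it places no upper bound on $\kappa=a'/a$: on the face $q=\alpha_-(1-a)$ one computes $\frac{d}{d\xi}\bigl(q-\alpha_-(1-a)\bigr)=(1-a)\bigl(1+a-v\alpha_-\bigr)+\alpha_-\kappa(3a-2)$, which is negative for large $\kappa$ when $a<2/3$, whatever the choice of $\alpha_\pm$. The paper instead uses the solid cone $\{0<a<1,\ \kappa^2+q^2<(1-a)^2\}$, which controls the full variable $z=\kappa+iq$; the threshold $v=2$ comes out of the invariance computation along the cone's boundary (not from matching center-manifold slopes), entry of the trajectory into the cone is immediate from $\abs{z_+(v)}<1$ rather than requiring a shooting argument, and one further step you omit is still needed: inside such a region the approach to $(1,0,0)$ cannot occur through the stable manifold, since by \cref{lin} that approach is tangent to the $q$-axis direction, which is incompatible with remaining in the cone. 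So the two load-bearing pieces --- the reduction of all fronts to the unstable trajectory together with the exhaustive trichotomy, and a verified invariant region for $v\ge2$ --- are missing from the proposal.
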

Thus $\vSteep$ is a closed non-empty subset of $(0,2)$.
\begin{figure}[!htbp]
	\centering
    \includegraphics[width=0.8\textwidth]{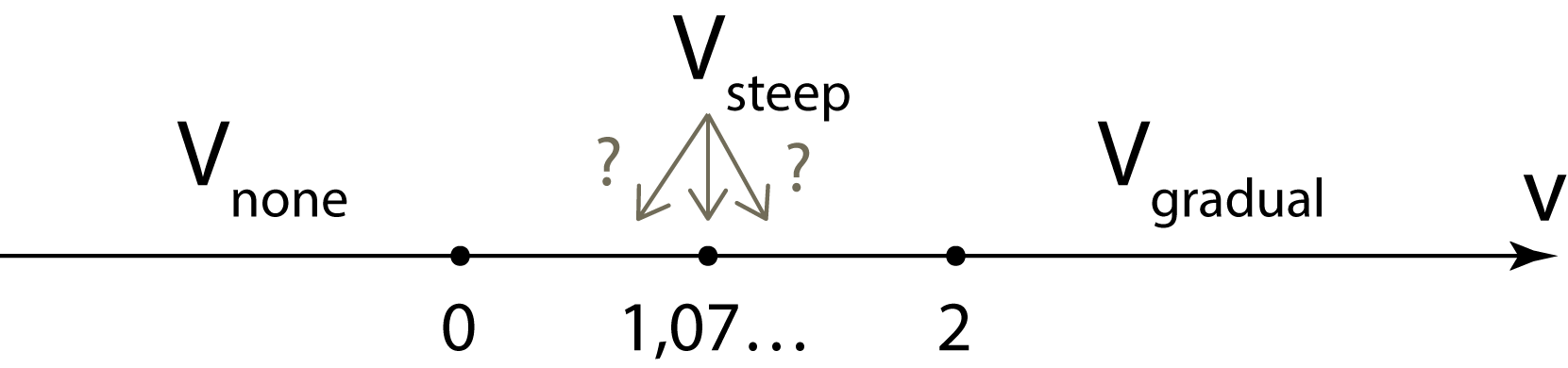}
    \caption{Partition of the velocity line (illustration of \cref{thm}).}
    \label{fig:partition_velocity_line}
\end{figure}

There is numerical evidence that the set $\vSteep$ is reduced to a single point (the corresponding value approximately equals $1.07$, see \cref{fig:partition_velocity_line}) but unfortunately we were unable to provide a proof of that. If this could be done, it would prove that the unique ``steep'' retracting front is the slowest among all retracting fronts of a given amplitude, reinforcing the analogy with front propagation into unstable states. 

There is also numerical evidence that, for every initial condition connecting the trivial solution $A\equiv0$ to a spatially homogeneous solution \cref{spat_hom} of a given amplitude $a$, the solution converges towards a retracting front of amplitude $a$ (this was already reported by Coullet and Kramer in \cite{CoulletKramer_retractingFronts_2004}). But this open question is by far beyond the scope of this paper. 
\section{Notation}
\label{sec:notation}
Let $v$ and $\omega$ be two real quantities, and $\xi\mapsto B(\xi)$ be a smooth complex-valued function of the real variable $\xi$. The function
\[
(x,t)\mapsto B(x-vt) e^{i\omega t}
\]
is a solution of equation \cref{nlche} if $B$ is a solution of the following equation:
\begin{equation}
\label{ode_B}
-vB'+i\omega B=i\abs{B}^2 B+B''
\,.
\end{equation}
Let us assume that the function $\xi\mapsto B(\xi)$ never vanishes and, proceeding as van Saarloos and Hohenberg in \cite{vanSaarloosHohenberg_frontsPulses_1992}, let us write $B(\xi)$ in polar coordinates and take the following notation:
\begin{equation}
\label{not_akq}
B(\xi)=a(\xi) e^{i\theta(\xi)}
\quad\mbox{and}\quad
q(\xi)=\theta'(\xi)
\quad\mbox{and}\quad 
\kappa(\xi)=\frac{a'(\xi)}{a(\xi)}
\quad\mbox{and}\quad 
z(\xi)= \kappa(\xi)+iq(\xi)
\,.
\end{equation}
The quantities $v$ and $\omega$ will be respectively called \emph{velocity} and \emph{frequency}. 
The quantities $a(\xi)$ (the modulus of the complex amplitude $A$), $\theta$, and $q$ will be respectively called \emph{amplitude}, \emph{phase}, and \emph{derivative of the phase}. 

According to this notation,
\[
B' = z B
\quad\mbox{and}\quad
B'' = (z' + z^2) B
\]
and equation \cref{ode_B} is equivalent to the following differential system in $\rr\times\cc$:
\begin{equation}
\label{syst_az}
\left\{
\begin{aligned}
da/d\xi &= (\ree z) a \\
dz/d\xi &= i(\omega-a^2)-vz-z^2
\end{aligned}
\right.
\end{equation}
or to the following differential system in $\rr^3$:
\begin{equation}
\label{syst_akq}
\left\{
\begin{aligned}
da/d\xi &= \kappa a \\
d\kappa/d\xi &= -v\kappa - \kappa^2 + q^2 \\
dq/d\xi &= \omega-a^2-vq-2q\kappa
\end{aligned}
\right.
\end{equation}
Besides, we see from \cref{syst_akq} that $\xi\mapsto a(\xi)$ satisfies the second order non-autonomous differential equation:
\begin{equation}
\label{2nd_order_a}
a''=q^2 a-va'
\,. 
\end{equation}
A mechanical interpretation of equation \cref{2nd_order_a} (the fact that the ``force'' term $q^2 a$ is repulsive) already supports the idea that bounded solutions of system \cref{syst_akq} might not be numerous (this will be formalized in \cref{lem:a_mon} below).
\section{Sketch of the proof and organization of the paper}
The (elementary) proof of \cref{thm} is summarized on \cref{fig:phase_space}. It follows from a dynamical study of system \cref{syst_akq}, in particular of all bounded solutions. We will show that the only relevant solutions lie on the unstable manifold of the hyperbolic saddle-focus $z_+(v)$ in the invariant plane $\{a=0\}$, and that this unstable manifold may either go to infinity, or converge towards the equilibrium corresponding to spatially homogeneous solutions \cref{spat_hom}, and that this convergence may occur either through the two-dimensional stable manifold of this equilibrium, or through its one-dimensional center manifold. The partition of $\rr$ into three subsets stated in \cref{thm} follows. 
\begin{figure}[!htbp]
	\centering
    \includegraphics[width=\textwidth]{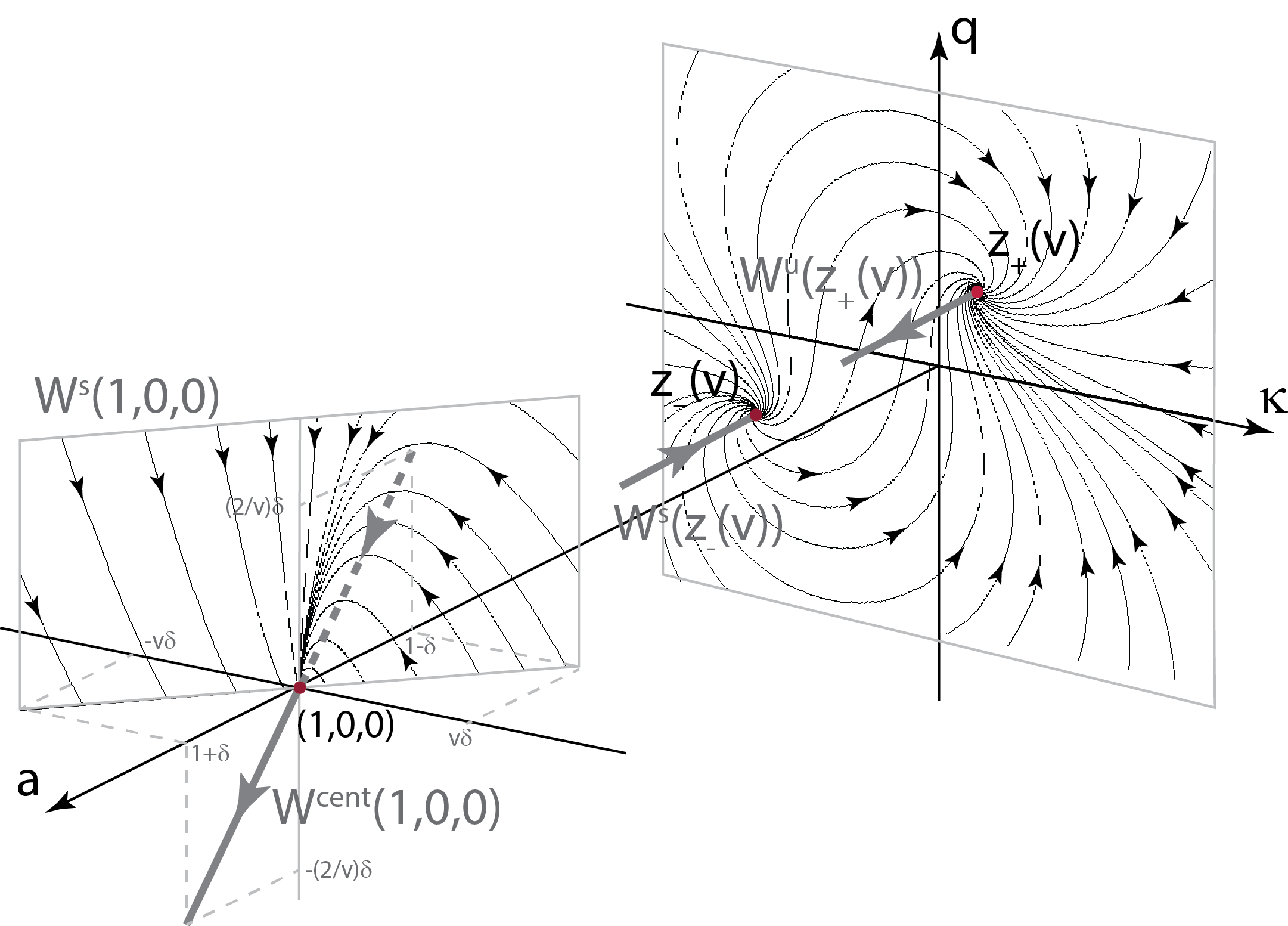}
    \caption{Phase space of equation \cref{syst_akq} in the case $\omega=1$. Equilibrium $(1,0,0)$ corresponds to spatially homogeneous time-periodic solutions of amplitude $1$ for equation \cref{nlche}. This equilibrium has a two-dimensional stable manifold and a one-dimensional center manifold. The center-manifold is tangent to the direction of eigenvector $(1,0,-2/v)$, and the quantity $a$ is increasing along solutions close to $(1,0,0)$ on this center manifold. Trajectories in the stable manifold of $(1,0,0)$ approach this point tangentially to the (vertical) eigenvector $(0,0,1)$. Retracting fronts correspond to homoclinic connections between $z_+(v)$ and $(1,0,0)$. Gradual retracting fronts approach $(1,0,0)$ though its center manifold, while for steep retracting fronts it is through the stable manifold. Trajectories in the $\{a=0\}$ plane and in the stable space of $(1,0,0)$ were drawn in the case $v=1$. Note that some scales are incorrect (for instance $\abs{z_-(v)}$ is larger than $1$ whereas it looks the opposite on the figure).}
    \label{fig:phase_space}
\end{figure}

Symmetries of system \cref{syst_akq} are stated in \cref{sec:symmetries}. In \cref{sec:monotonic} we use equation \cref{2nd_order_a} (and its mechanical interpretation) to show that relevant solutions must have a monotonic amplitude, with a sense of monotonicity opposite to that of the velocity $v$ (whih plays the role of a damping coefficient in this equation). The very same idea is used in \cref{sec:vanishing} to show that solutions of the second order differential equation \cref{ode_B} that vanish at some point are unbounded. In \cref{sec:plus_infty} we show that relevant solutions exist only if $\omega$ is positive, and that they must approach the equilibrium $(\sqrt{\omega},0,0)$ (corresponding to spatially homogeneous solutions \cref{spat_hom} of amplitude $\sqrt{\omega}$) at plus infinity.
In \cref{sec:neighb_equil} we observe that this equilibrium has a two-dimensional stable manifold and a one-dimensional center manifold, and we give a rough description of this center manifold. Dynamics in the invariant plane $\{a=0\}$ is studied (recalled) in \cref{sec:a_equals_0}. In \cref{sec:minus_infty} we prove that the sole relevant solutions lie in the unstable manifold of equilibrium $z_+(v)$. Finally, the dynamical behaviour on this unstable manifold and the final splitting argument is given in \cref{sec:unst_man}. 
\section{Symmetries}
\label{sec:symmetries}
The three following symmetries hold for system \cref{syst_akq} and the parameters $v$ and $\omega$:
\begin{align}
a &\longrightarrow -a \label{a_-a_sym}\\
(\xi,\kappa,q,v) &\longrightarrow (-\xi,-\kappa,-q,-v) \label{rev_sym}\\
(\xi,a,\kappa,q,v,\omega) &\longrightarrow (\xi/\lambda, \lambda a, \lambda \kappa,q,\lambda v,\lambda^2 \omega)\quad\mbox{for every positive quantity}\quad \lambda\,.\label{scale_sym}
\end{align}
They can be viewed, respectively, as consequences of the following properties of initial equation \cref{nlche}:
\begin{itemize}
\item rotation invariance: $A\mapsto A e^{i\pi}$,
\item space reversibility: $x\leftrightarrow -x$,
\item scale invariance: $(A,t,x)\leftrightarrow(\lambda A, t/\lambda^2 , x/\lambda)$.
\end{itemize}
According to $a\rightarrow-a$-symmetry \cref{a_-a_sym}, it is sufficient to study system \cref{syst_akq} on half-space $\bigl\{(a,\kappa,q)\in\rr^3:a\ge0\bigr\}$; by the way the plane $\bigl\{(a,\kappa,q)\in\rr^3:a=0\bigr\}$ is invariant, as are the two complementary open half-spaces. Obviously, for every solution to this system (defined on a single interval), the function $\xi\mapsto a(\xi)$ either identically vanishes or never vanishes. 

According to space reversibility \cref{rev_sym}, it is sufficient to study system \cref{syst_akq} either for $v$ nonnegative or nonpositive. 

Outside of the invariant plane $\bigl\{(a,\kappa,q)\in\rr^3:a=0\bigr\}$, system \cref{syst_akq} has no equilibrium if $\omega$ is nonpositive, and two symmetric equilibria at:
\[
(a,\kappa,q)=(\pm\sqrt{\omega},0,0)
\]
if $\omega$ is positive. These two equilibria correspond to spatially homogeneous solutions \cref{spat_hom} of equation \cref{nlche}. 

According to scale invariance \cref{scale_sym}, frequency $\omega$ can be normalized to $1$, $0$, or $-1$. 
In view of the fact that the nonlinear term in initial equation \cref{nlche} ``pushes in the trigonometric sense'', nonpositive values of $\omega$ are expected to be irrelevant (this will be proved below). 
\section{Monotonicity of relevant solutions and sign of the velocity}
\label{sec:monotonic}
In view of the expression of $a''$ in equation \cref{2nd_order_a}, since the force term in this equation is repulsive, we expect that, if at some point the derivative $a'$ does not have the same sign as the velocity $v$ (which plays in \cref{2nd_order_a} the role of a dissipation coefficient), then the amplitude $a$ must diverge either in the future on in the past. The following lemma formalizes this heuristics. 
\begin{lemma}[the sign of $a'(\cdot)$ is always equal to that of $v$, otherwise amplitude is unbounded]
\label{lem:a_mon}
Let $\xi\mapsto\bigl(a(\xi),\kappa(\xi),q(\xi)\bigr)$ denote a nonconstant solution of system \cref{syst_akq}, defined on a maximal existence interval $(\xi_{\min},\xi_{\max})$ --- with $-\infty\le \xi_{\min}<\xi_{\max}\le+\infty$, and taking values in half-space $\{(a,\kappa,q)\in\rr^3:a>0\}$. 
Then, 
\begin{enumerate}
\item If $v\le0$ and there exists $\xi_0$ in $(\xi_{\min},\xi_{\max})$ such that $a'(\xi_0)\ge 0$, 
then $a(\xi)$ approaches $+\infty$ when $\xi$ approaches $\xi_{max}$. 
\item If $v\ge0$ and there exists $\xi_0$ in $(\xi_{\min},\xi_{\max})$ such that $a'(\xi_0)\le 0$, 
then $a(\xi)$ approaches $+\infty$ when $\xi$ approaches $\xi_{min}$. 
\end{enumerate}
\end{lemma}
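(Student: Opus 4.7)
The plan is to prove assertion~(1) directly, and deduce assertion~(2) from the reversibility symmetry \cref{rev_sym}, which maps the second situation onto the first. For~(1), the starting point is the integrating factor for the ``damping'' term in \cref{2nd_order_a}: multiplying $a''+va'=q^2a$ by $e^{v\xi}$ gives
\begin{equation*}
\bigl(e^{v\xi}a'\bigr)' = e^{v\xi}q^2a\ge 0\,,
\end{equation*}
so $W(\xi):=e^{v\xi}a'(\xi)$ is non-decreasing on $(\xi_{\min},\xi_{\max})$. The hypothesis $a'(\xi_0)\ge0$ then yields $W(\xi)\ge W(\xi_0)\ge 0$ for all $\xi\ge\xi_0$, so $a$ is non-decreasing on $[\xi_0,\xi_{\max})$.

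Next I would upgrade this to a strict positive lower bound on $a'$. Nonconstancy prevents $q$ from vanishing identically on any subinterval: if it did, the third equation of \cref{syst_akq} would force $a\equiv\sqrt{\omega}$ (in particular $\omega>0$), hence $\kappa\equiv 0$, and by uniqueness the whole trajectory would be the constant solution $(\sqrt{\omega},0,0)$. Picking $\xi_*\in(\xi_0,\xi_{\max})$ with $q(\xi_*)\ne 0$ and using
\begin{equation*}
W(\xi) - W(\xi_0) = \int_{\xi_0}^{\xi} e^{vs}q(s)^2 a(s)\, ds
\end{equation*}
gives $W(\xi_2)>0$ for some $\xi_2\in(\xi_*,\xi_{\max})$. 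For $\xi\ge\xi_2$, using $v\le0$,
\begin{equation*}
a'(\xi) = e^{-v\xi}W(\xi) \ge e^{-v(\xi-\xi_2)}a'(\xi_2) \ge a'(\xi_2) > 0\,.
\end{equation*}
If $\xi_{\max}=+\infty$, this lower bound alone forces $a(\xi)\to+\infty$ by (at least) linear growth.

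The remaining case $\xi_{\max}<+\infty$ is handled by contradiction: assume $a\le M$ on $[\xi_2,\xi_{\max})$. A direct computation from \cref{syst_akq} shows that $\rho:=\kappa^2+q^2$ satisfies
\begin{equation*}
\rho' = -2(\kappa+v)\rho + 2q(\omega - a^2)\,.
\end{equation*}
Since $\kappa\ge 0$ (because $a'\ge 0$ and $a>0$) and $v\le 0$, the first term is bounded above by $2|v|\rho$, and the second by $2(|\omega|+M^2)\sqrt{\rho}$; setting $\sigma=\sqrt{\rho}$ yields $\sigma'\le |v|\sigma + (|\omega|+M^2)$, a linear differential inequality, so $\sigma$ (hence $\kappa,q$) stays bounded on the finite interval $[\xi_2,\xi_{\max})$. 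The whole trajectory then remains in a compact subset of $\{a>0\}$, and the solution extends past $\xi_{\max}$, contradicting maximality. Therefore $a$ is unbounded on $[\xi_0,\xi_{\max})$, and being non-decreasing it tends to $+\infty$.

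The main obstacle I anticipate is this last step. The first-order monotonicity argument only produces $a'\ge a'(\xi_2)>0$, which on a finite interval does not force $a$ to diverge: blow-up could a priori occur in $\kappa$ or $q$ first. The ad hoc Lyapunov-type bound on $\kappa^2+q^2$, which crucially uses the sign $\kappa\ge 0$ obtained in the preceding step, is the one point where the full three-dimensional structure of \cref{syst_akq} is really needed, beyond the scalar mechanical equation \cref{2nd_order_a}.
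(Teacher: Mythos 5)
Your proof is correct and follows essentially the same route as the paper: reduce assertion~2 to assertion~1 by the reversibility symmetry \cref{rev_sym}, show via the repulsive force term in \cref{2nd_order_a} that $a'$ becomes (and stays) strictly positive after $\xi_0$, conclude immediately when $\xi_{\max}=+\infty$, and rule out finite-time blow-up by bounding $\kappa^2+q^2=\abs{z}^2$ (your explicit computation of $\rho'$ is just the real form of the paper's inequality \cref{d_abs_z}). The only cosmetic difference is that you obtain strict positivity of $a'$ via the integrating factor $e^{v\xi}$ and the fact that $q$ cannot vanish identically, where the paper does a pointwise Taylor-type case analysis at $\xi_0$ (up to $a''''$); both are valid.
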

\begin{proof}
According to the space reversibility symmetry \cref{rev_sym}, assertions 1 and 2 of this lemma are equivalent. Let us prove assertion 1. Thus we assume that $v$ is nonpositive and that there exists $\xi_0$ in $(\xi_{\min},\xi_{\max})$ such that $a'(\xi_0)$ is nonnegative. 

The main step is to find a value of $\xi$ (equal to or slightly larger than $\xi_0$) such that $a'(\xi)$ is positive. According to \cref{2nd_order_a}, one of the three following cases occurs:
\begin{enumerate}
\item $a'(\xi_0)>0$,
\item $a'(\xi_0)=0$ and $q(\xi_0)\not=0$, thus $a''(\xi_0)>0$,
\item $a'(\xi_0)=0$ and $q(\xi_0)=0$ equals zero, thus $a''(\xi_0)=0$; in this case $q'(\xi_0)\not=0$, or else the solution would be constant.  
\end{enumerate}
According to \cref{2nd_order_a}, 
\[
a'''=2qq'a + q^2 a' - v a'' 
\quad\mbox{and}\quad
a''''=2q'^2 a + 2qq''a +4qq'a' + q^2 a'' - v a'''
\,,
\]
thus in the third of the three cases above, we get:  
\[
a'''(\xi_0)=0
\quad\mbox{and}\quad
a''''(\xi_0)=2 q'^2(\xi_0) a(\xi_0)>0
\,.
\]
In short, $a'$ must be positive at some place, either at $\xi_0$ or immediately after. As a consequence, there exists $\xi_1$ in $[\xi_0,\xi_{\max})$ such that $a'(\xi_1)>0$. Then, according to \cref{2nd_order_a}, 
\begin{equation}
\label{low_bd_aprime}
a'(\xi)\ge a'(\xi_1)
\quad\mbox{for all}\quad
\xi\mbox{ in } [\xi_1,\xi_{\max})
\,.
\end{equation}

Let us proceed by contradiction and assume that $a(\xi)$ does not approach $+\infty$ when $\xi$ approaches $\xi_{\max}$. Then, according to \cref{low_bd_aprime}, we must have $\xi_{\max}<+\infty$. 

But on the other hand, equation \cref{syst_az} on $dz/d\xi$ yields, for every $\xi$ in $(\xi_{\min},\xi_{\max})$:
\begin{equation}
\label{d_abs_z}
\frac{d\abs{z}}{d\xi}\le -(v + \ree z)\abs{z} + \abs{\omega-a^2}
\end{equation}
and since $\ree z(\xi)=a'(\xi)/a(\xi)$ is positive for all $\xi$ in $[\xi_1,\xi_{\max})$, this shows that blow-up at $\xi_{\max}$ cannot occur, a contradiction. The lemma is proved. 
\end{proof}
According to \cref{lem:a_mon}, solutions of system \cref{syst_akq} corresponding to retracting fronts (for which the amplitude approaches zero when $\xi$ approaches $-\infty$ and a positive value when $\xi$ approaches $+\infty$) can exist only for $v$ positive, and have a strictly increasing amplitude (this proves \vref{prop:monotonic_amplitude}). 
\section{Non relevance of solutions with vanishing amplitude}
\label{sec:vanishing}
The aim of this short \namecref{sec:vanishing} is to get rid of solutions of equation \cref{ode_B} that vanish at some point. We do this now since the argument is exactly the same as in the proof of \cref{lem:a_mon} above. 
\begin{lemma}[amplitude does not vanish, otherwise it is unbounded]
\label{lem:vanishing}
Let $\xi\mapsto B(\xi)$ denote a solution of equation \cref{ode_B}, defined on a maximal existence interval $(\xi_{\min},\xi_{\max})$ --- with $-\infty\le \xi_{\min}<\xi_{\max}\le+\infty$, and such that there exists $\xi_0$ in $(\xi_{\min},\xi_{\max})$ such that $B(\xi_0)=0$. Assume furthermore that this solution is not identically zero. Then, 
\begin{enumerate}
\item if $v$ is nonpositive, then $\abs{B(\xi)}$ approaches $+\infty$ when $\xi$ approaches $\xi_{\max}$;
\item if $v$ is nonnegative, then $\abs{B(\xi)}$ approaches $+\infty$ when $\xi$ approaches $\xi_{\min}$;
\end{enumerate}
\end{lemma}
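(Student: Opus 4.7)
The plan is to mimic the proof of \cref{lem:a_mon}, with $r=\abs{B}^2$ playing the role of $a$. The key observation is that $r$ satisfies a damped second-order identity of the same shape as \cref{2nd_order_a}, but now with a \emph{manifestly nonnegative} ``force'' term.

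First I would derive from the ODE \cref{ode_B} the identity
\[
r'' + v r' = 2\abs{B'}^2 \ge 0.
\]
Indeed, $r' = 2\ree(B'\bar B)$ and $r'' = 2\abs{B'}^2 + 2\ree(B''\bar B)$; substituting $B'' = -vB' + i\omega B - i\abs{B}^2 B$ and taking real parts annihilates all imaginary contributions, leaving $2\ree(B''\bar B) = -v r'$. Evaluated at $\xi_0$, where $r(\xi_0) = r'(\xi_0) = 0$, this gives $r''(\xi_0) = 2\abs{B'(\xi_0)}^2$, which is strictly positive: otherwise $B(\xi_0) = B'(\xi_0) = 0$ and uniqueness of solutions of the second-order ODE \cref{ode_B} would force $B \equiv 0$, contradicting the assumption.

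Assume from now on that $v \le 0$; the other case follows from the space-reversibility \cref{rev_sym} applied to \cref{ode_B}. The two facts above show that $r$ has a strict local minimum at $\xi_0$, so there exists $\xi_1 > \xi_0$ with $r'(\xi_1) > 0$. Since $r'' = -v r' + 2\abs{B'}^2$ is nonnegative as soon as $r' \ge 0$ and $v \le 0$, the quantity $r'$ cannot decrease while it stays nonnegative; hence $r'(\xi) \ge r'(\xi_1) > 0$ for every $\xi$ in $[\xi_1, \xi_{\max})$. In particular $r$ is strictly increasing on this interval, so $B$ does not vanish there, and $z = B'/B$ from \cref{not_akq} is well-defined on $(\xi_1, \xi_{\max})$ with $\ree z = r'/(2r) > 0$.

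To conclude I would argue by contradiction: if $\abs{B(\xi)}$ does not approach $+\infty$ as $\xi \to \xi_{\max}$, then $r$ stays bounded on $[\xi_1, \xi_{\max})$, which combined with the lower bound $r' \ge r'(\xi_1) > 0$ forces $\xi_{\max} < +\infty$. Then the estimate \cref{d_abs_z}, together with the positivity of $\ree z$ and the boundedness of $a = \sqrt r$, reduces to a linear differential inequality $d\abs{z}/d\xi \le -v\abs{z} + C$ for some constant $C$; Gronwall keeps $\abs{z}$ bounded on the finite interval $[\xi_1, \xi_{\max})$, hence so is $\abs{B'} = \abs{z}\abs{B}$, precluding any blow-up of $(B, B')$ at $\xi_{\max}$ and contradicting maximality. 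The only mildly delicate point in the chain is the non-revanishing of $B$ past $\xi_0$, which is however automatic once $r'$ is shown to stay positive; the remainder is essentially a line-by-line transcription of the proof of \cref{lem:a_mon}.
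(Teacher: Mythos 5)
Your proof is correct, and it rests on the same underlying mechanism as the paper's (a damped second-order quantity with nonnegative forcing must grow, and bounded growth on a finite interval contradicts maximality via \cref{d_abs_z} and Gronwall), but it takes a genuinely self-contained route. The paper handles the zero of $B$ by restricting to the maximal interval $(\xi_0,\tilde\xi_{\max})$ on which $B$ does not vanish, passing there to the polar variables \cref{not_akq}, observing that $a(\xi)\to 0$ as $\xi\to\xi_0^+$ forces $a'>0$ somewhere nearby, and then invoking \cref{lem:a_mon} as a black box (which also yields $\tilde\xi_{\max}=\xi_{\max}$ since $a$ is increasing). You instead work with $r=\abs{B}^2$, which is smooth through the zero, derive the identity $r''+vr'=2\abs{B'}^2\ge 0$ (the analogue of \cref{2nd_order_a}), obtain the strict minimum at $\xi_0$ from $r''(\xi_0)=2\abs{B'(\xi_0)}^2>0$ (with $B'(\xi_0)\ne 0$ by uniqueness), and then rerun by hand the monotonicity bound $r'\ge r'(\xi_1)>0$ and the Gronwall/continuation contradiction from the proof of \cref{lem:a_mon}. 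The paper's reduction is shorter given \cref{lem:a_mon}; your variant avoids the singular change of variables at $\xi_0$ and the $\tilde\xi_{\max}$ bookkeeping (non-vanishing of $B$ past $\xi_0$ comes for free once $r'$ stays positive), and the identity for $r$ makes the nondegeneracy and isolation of zeros of $B$ transparent. Both arguments share the same harmless technicality that \cref{d_abs_z} should be read for the upper Dini derivative at points where $z$ vanishes.
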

\begin{proof}
According to the space reversibility symmetry \cref{rev_sym}, assertions 1 and 2 of this lemma are equivalent. Let us prove assertion 1. Thus let us assume that $v$ is nonpositive. Since $B(\xi_0)=0$ and $B(\cdot)$ is not identically zero, $B'(\xi_0)$ is not equal to zero. 

Let $\tilde\xi_{\max}$ denote the supremum of the (nonempty) set:
\[
\{\tilde\xi\in\rr:\tilde\xi>\xi_0 \mbox{ and } B(\xi)\not=0 \mbox{ for all } \xi \mbox{ in } (\xi_0,\tilde\xi)\}
\,.
\]
Since $B(\cdot)$ does not vanish on $(\xi_0,\tilde\xi_{\max})$, definition \cref{not_akq} of $(a,\kappa,q)$ is not singular on this interval (it is definitely singular at $\xi_0$), thus this definition provides a solution:
\[
(\xi_0,\tilde\xi_{\max})\rightarrow\rr_+^*\times\rr^2,
\quad
\xi\mapsto\bigl(a(\xi),\kappa(\xi),q(\xi)\bigr)
\]
of system \cref{syst_akq}. Since $a(\xi)$ approaches $0$ when $\xi$ approaches $\xi_0$, there must exists $\xi_1$ larger than $\xi_0$ (close to $\xi_0$) such that $a'(\xi)$ is positive. It thus follows from \cref{lem:a_mon} that $a(\xi)$ approaches $+\infty$ when $\xi$ approaches $\tilde\xi_{\max}$, and by the way that the function $\xi\mapsto a(\xi)$ is strictly increasing on $(\xi_0,\tilde\xi_{\max}$ and that $\tilde\xi_{\max}$ equals $\xi_{\max}$. \Cref{lem:vanishing} is proved. 
\end{proof}
These non relevant solutions will be met in \cref{sec:minus_infty} below.
\section{Asymptotics at plus infinity}
\label{sec:plus_infty}
\begin{lemma}[amplitude converges at plus infinity, otherwise it is unbounded]
\label{lem:om_pos}
Let $\xi\mapsto\bigl(a(\xi),\kappa(\xi),q(\xi)\bigr)$ denote a nonconstant solution of system \cref{syst_akq}, defined on a maximal existence interval $(\xi_{\min},\xi_{\max})$ --- with $-\infty\le \xi_{\min}<\xi_{\max}\le+\infty$, and taking values in half-space $\{(a,\kappa,q)\in\rr^3:a>0\}$. Assume that:
\begin{itemize}
\item $v$ is positive,
\item the amplitude $\xi\mapsto a(\xi)$ is strictly increasing on $(\xi_{\min},\xi_{\max})$,
\item the amplitude does not approach $+\infty$ when $\xi$ approaches $\xi_{\max}$. 
\end{itemize}
Then the following conclusions hold: 
\begin{enumerate}
\item the upper bound $\xi_{\max}$ of the interval of existence of the solution is equal to $+\infty$,
\item the frequency $\omega$ is positive,
\item the amplitude $a(\xi)$ approaches $\sqrt{\omega}$ when $\xi$ approaches $+\infty$. 
\end{enumerate}
\end{lemma}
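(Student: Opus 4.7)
The plan is to proceed in three stages. First, for \emph{global existence} (conclusion (1)): since $a$ is strictly increasing and does not tend to $+\infty$, it has a finite positive limit $a_\infty$ as $\xi \to \xi_{\max}$, and in particular $a$ remains bounded on the whole interval. Monotonicity also gives $\kappa = a'/a \ge 0$ throughout, so inequality \cref{d_abs_z} together with the boundedness of $|\omega - a^2|$ reduces to a \emph{linear} differential inequality of the form $d|z|/d\xi \le -v|z| + M$. A Grönwall argument then yields a uniform bound on $|z|$ on $[\xi_0,\xi_{\max})$, which combined with the bound on $a$ rules out finite-time blow-up and forces $\xi_{\max} = +\infty$.

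Second, for conclusions (2) and (3) I would appeal to an \emph{$\omega$-limit set argument}. The trajectory is now bounded on $[\xi_0, +\infty)$, so its $\omega$-limit set $\Omega$ is a nonempty, compact, flow-invariant subset of $\rr^3$. Monotone convergence of $a(\xi)$ to $a_\infty$ implies that $\Omega$ is contained in the plane $\{a = a_\infty\}$. Therefore any complete orbit contained in $\Omega$ satisfies $a \equiv a_\infty$; the first equation of \cref{syst_akq} then forces $\kappa \equiv 0$, the second reduces to $q^2 \equiv 0$ (hence $q \equiv 0$), and the third collapses to $\omega - a_\infty^2 = 0$. Since $a_\infty > 0$ this yields $\omega > 0$ and $a_\infty = \sqrt{\omega}$. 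Hence $\Omega$ is the singleton $\{(\sqrt{\omega}, 0, 0)\}$, and the bounded trajectory must converge to it, giving conclusions (2) and (3) simultaneously.

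The only mildly delicate step is the first one, because the equation for $z$ is of Riccati type and so could \emph{a priori} blow up in finite time. The saving grace, already recorded in \cref{d_abs_z}, is that the quadratic term $-z^2$ contributes $-\kappa|z|^2 \le 0$ to the evolution of $|z|^2$ precisely because $a$ is increasing; this is exactly what turns the inequality for $|z|$ into a linear one and rules out blow-up. Everything downstream is a standard application of dynamical systems techniques.
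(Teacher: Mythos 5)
Your proof is correct, and conclusion (1) is obtained exactly as in the paper: the monotonicity hypothesis makes $\ree z=\kappa\ge 0$, so inequality \cref{d_abs_z} becomes a linear differential inequality whose solutions cannot blow up in finite time, whence $\xi_{\max}=+\infty$. For conclusions (2) and (3), however, you take a genuinely different route. The paper argues ``by hand'': using $v>0$ it first bounds $\abs{z}$, hence $\kappa$, $q$, $a'$, $a''$ (and then $q'$, $a'''$), and deduces successively, by Barbalat-type reasoning, that $a'\to0$, $a''\to0$, hence $q\to0$ from \cref{2nd_order_a}, and finally $\omega-a_\infty^2=0$ from the equation for $q'$ in \cref{syst_akq}. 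You instead package all of this into a LaSalle-type invariance argument: the forward orbit is bounded (this is where you, like the paper, use $v>0$ via the Grönwall bound $\abs{z}\le\max\bigl(\abs{z(\xi_0)},M/v\bigr)$), its $\omega$-limit set lies in the plane $\{a=a_\infty\}$ by monotone convergence of $a$, and invariance of that set forces $\kappa\equiv0$, $q\equiv0$ and $\omega=a_\infty^2$ along any complete orbit it contains (here you do need $a_\infty>0$, which holds since $a$ is positive and increasing — you use it a line later than where it is first needed, but it is available). Your version is shorter and conceptually cleaner once one grants the standard facts about $\omega$-limit sets (nonempty, compact, invariant for a bounded forward orbit of a locally Lipschitz field); the paper's version is more elementary and self-contained, at the cost of tracking several derivative bounds explicitly. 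Both are complete proofs of the lemma.
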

\begin{proof}
Since $\ree z(\xi)=a'(\xi)/a(\xi)$ is positive for all $\xi$ in $(\xi_{\min},\xi_{\max})$, inequality \cref{d_abs_z} on $d\abs{z}/d\xi$ shows that $\xi_{\max}$ must be equal to $+\infty$; assertion 1 is proved. 

Since $v$ is positive, inequality \cref{d_abs_z} even shows that $\abs{z}(\xi)$ remains bounded when $\xi$ approaches $+\infty$. In other words $\kappa(\xi)$ and $q(\xi)$ remain bounded when $\xi$ approaches $+\infty$. Expressions of $da/d\xi$ in \cref{syst_akq} and of $a''=d^2a/d\xi^2$ in \cref{2nd_order_a} show that $a'(\xi)$ and $a''(\xi)$ remain in turn bounded when $\xi$ approaches $+\infty$. 

On the other hand, according to its boundedness and monotonicity, the amplitude $a(\xi)$ must approach a finite limit when $\xi$ approaches $+\infty$, and the boundedness of $a''(\xi)$ shows that $a'(\xi)$ must go to zero when $\xi$ approaches $+\infty$ (thus the same is true for $\kappa(\xi)$). 

Again, the quantities $q'(\xi)$ and thus $a'''(\xi)$ are bounded when $\xi$ approaches $+\infty$. This shows that $a''(\xi)$ approaches zero when $\xi$ approaches $+\infty$ and, according to expression \cref{2nd_order_a} of $a''(\xi)$, this shows that $q(\xi)$ approaches zero when $\xi$ approaches $+\infty$. 

According to the expression of $q'(\xi)$ in system \cref{syst_akq}, this shows that $\omega$ is positive and that the limit of $a(\xi)$ when $\xi$ approaches $+\infty$ must be equal to $\sqrt{\omega}$. Assertions 2 and 3 are proved. 
\end{proof}
This lemma shows that no relevant solution can be found for system \cref{syst_akq} if the frequency $\omega$ is nonpositive. From now on, we thus assume that $\omega$ is positive, and, according to the scale-invariance symmetry \cref{scale_inv}, we assume without loss of generality that:
\[
\omega = 1
\,.
\]
Systems \cref{syst_akq} and \cref{syst_az} and equation \cref{ode_B} thus become, respectively:
\begin{equation}
\label{syst_akq_om_1}
\left\{
\begin{aligned}
da/d\xi &= \kappa a \\
d\kappa/d\xi &= -v\kappa +q^2-\kappa^2 \\
dq/d\xi &= 1-a^2-vq-2q\kappa
\end{aligned}
\right.
\end{equation}
and
\begin{equation}
\label{syst_az_om_1}
\left\{
\begin{aligned}
da/d\xi &= (\ree z) a \\
dz/d\xi &= i(1-a^2)-vz-z^2
\end{aligned}
\right.
\end{equation}
and
\begin{equation}
\label{ode_B_om_1}
-vB'+i B=i\abs{B}^2 B+B''
\,.
\end{equation}

Recall that the velocity $v$ is assumed to be positive. 

According to \cref{lem:om_pos}, every relevant solution of system \cref{syst_akq_om_1} in the half-space 
\[
\{(a,\kappa,q)\in\rr^3:a>0\}
\]
must converge, when $\xi$ approaches $+\infty$, to the equilibrium $(1,0,0)$ corresponding to spatially homogeneous time-periodic solutions of amplitude $1$. Let us now look at the dynamics close to this equilibrium. 
\section{Dynamics in a neighbourhood of the finite amplitude equilibrium}
\label{sec:neighb_equil}
We assume that the velocity $v$ is positive. 
The matrix of the linearization of system \cref{syst_akq_om_1} at $(1,0,0)$ reads:
\[
\begin{pmatrix}
0 & 1 & 0 \\
0 & -v & 0 \\
-2 & 0 & -v
\end{pmatrix}
\quad\mbox{in the canonical basis of $\rr^3$}
\]
and
\begin{equation}
\label{lin}
\begin{pmatrix}
-v&2&0\cr 0&-v&0\cr 0&0&0
\end{pmatrix}
\quad\mbox{in the basis:}\quad 
\begin{pmatrix}
0 \\ 0 \\ 1
\end{pmatrix},
\begin{pmatrix}
-1 \\ v \\ 0
\end{pmatrix},
\begin{pmatrix}
v \\ 0 \\ -2
\end{pmatrix}
\end{equation}
(see \vref{fig:phase_space}). This equilibrium thus admits:
\begin{itemize}
\item a two-dimensional stable manifold tangent at $(1,0,0)$ to the plane orthogonal to the vector $(v,1,0)$,
\item a one-dimensional center manifold tangent at $(1,0,0)$ to the direction of $(v,0,-2)$.
\end{itemize}

The (non-unique) center manifold, viewed as the graph of a function: $a\mapsto\bigl(\kappa(a),q(a)\bigr)$ for $a\simeq1$, admits the following second-order approximation (where $\kappa_2$ and $q_2$ are real constants):
\begin{align}
\kappa(a) &= \kappa_2 (1-a)^2 + \ooo_{a\rightarrow 1}\bigl((1-a)^3\bigr) \label{kappa_cent}\\
q(a) &= \frac{2}{v}(1-a) + q_2 (1-a)^2 + \ooo_{a\rightarrow 1}\bigl((1-a)^3\bigr) \nonumber
\end{align}
Replacing this ansatz into system \cref{syst_akq_om_1}, we obtain:
\[
\kappa_2 = \frac{4}{v^3}
\quad\mbox{and}\quad
q_2 = \frac{8}{v^5} - \frac{1}{v}
\,.
\]
The fact that $\kappa_2$ is positive shows that, close to $(1,0,0)$, the $a$-component of solutions on the center manifold is increasing with $\xi$. As a consequence, in a neighbourhood of $(1,0,0)$:
\begin{itemize}
\item the center manifold is unique on one side, where the dynamics drives us away from $(1,0,0)$, namely for $a>1$;
\item on the other side, where the dynamics brings us closer to $(1,0,0)$, namely for $a<1$, every solution is on a center manifold. 
\end{itemize}

Let $\xi\mapsto\bigl(a(\xi),\kappa(\xi),q(\xi)\bigr)$ denote a solution of system \cref{syst_akq_om_1} defined on a maximal time interval $(\xi_{\min},+\infty)$ --- with $-\infty\le\xi_{\min}<0$. 
\begin{enumerate}
\item If this solution belongs to the stable manifold of $(1,0,0)$, then it approaches this point tangentially to the vector $(0,0,1)$ when $\xi$ approaches $+\infty$. As a consequence, 
\begin{equation}
\label{approach_stab}
\norm{\bigl(a(\xi)-1,\kappa(\xi),q(\xi)\bigr)} = \ooo_{\xi\rightarrow+\infty}\bigl(e^{-v\xi}\bigr)
\quad\mbox{and}\quad
1-a(\xi) = o_{\xi\rightarrow+\infty}\bigl(e^{-v\xi}\bigr)
\,,
\end{equation}
and
\begin{equation}
\label{tot_phase_stab}
\int_0^{+\infty} q(\xi)\,d\xi < +\infty
\,.
\end{equation}
\item If this solution belongs to the center manifold of $(1,0,0)$, then it approaches this point tangentially to the vector $(-v,0,2)$, and from expression \cref{kappa_cent} it follows that:
\begin{equation}
\label{approach_cent}
1-a(\xi) = \frac{v^3}{4\xi} + o_{\xi\rightarrow+\infty}\Bigl(\frac{1}{\xi}\Bigr)
\quad\mbox{and}\quad
q(\xi) = \frac{v^2}{2\xi} + o_{\xi\rightarrow+\infty}\Bigl(\frac{1}{\xi}\Bigr)
\,,
\end{equation}
thus
\begin{equation}
\label{tot_phase_center}
\int_0^{+\infty} q(\xi)\,d\xi = +\infty
\,.
\end{equation}
\end{enumerate}
\section{Dynamics in the zero amplitude invariant plane}
\label{sec:a_equals_0}
The dynamics of system \cref{syst_az_om_1} in the invariant plane $\bigl\{(a,z)\in\rr\times\cc:a=0\bigr\}$ is governed by the differential equation: 
\begin{equation}
\label{equ_z_a_eq_0}
dz/d\xi= i-vz-z^2
\quad\mbox{where}\quad
z = \kappa + i q
\end{equation}
(see \vref{fig:phase_space}).
Let $\alpha+i\beta$ denote the square root of $v^2+4i$ with positive real part, and let
\[
z_-(v) = \frac{-v -\alpha-i\beta}{2}
\quad\mbox{and}\quad
z_+(v) = \frac{-v +\alpha+i\beta}{2}
\]
denote the two equilibria of equation \cref{equ_z_a_eq_0}. From $\alpha^2-\beta^2 = v^2$ and $\alpha\beta=2$, we get:
\[
\ree z_-(v) < 0 < \ree z_+(v) 
\quad\mbox{and}\quad
\imm z_-(v) < 0 < \imm z_+(v) 
\,,
\]
and, if $v>0$, 
\begin{equation}
\label{abs_z_plus}
\abs{z_+(v)}<1<\abs{z_-(v)}
\,.
\end{equation}
The change of variables:
\[
Z=\frac{z-z_+(v)}{z-z_-(v)}
\]
transforms equation \cref{equ_z_a_eq_0} into:
\[
dZ/d\xi=-\bigl((z_+(v)-z_-(v)\bigr)\,Z
\,.
\]
Thus, with respect to the dynamics in the plane $\{a=0\}$, $z_-(v)$ and $z_+(v)$ are a repulsive focus and an attractive focus, respectively; and with respect to the dynamics of system \cref{syst_akq} in $\rr^3$, both are hyperbolic saddle-focus (transversely to the plane $\{a=0\}$ the equilibrium $z_-(v)$ is attractive whereas $z_+(v)$ is repulsive). All this is shown on \vref{fig:phase_space}.

In the following we will make for convenience the following notation abuse: we will denote by $z_+(v)$ the point $\bigl(0,\ree z_+(v),\imm z_+(v)\bigr)$ in the phase spase $\rr^3$ of system \cref{syst_akq_om_1} --- or the point $\bigl(0, z_+(v)\bigr)$ in the phase space $\rr\times\cc$ of system \cref{syst_az_om_1}. 
\section{Asymptotics at minus infinity}
\label{sec:minus_infty}
The assumptions of \cref{lem:unst_man} below are almost identical to those of the previous \cref{lem:a_mon,lem:om_pos}. But this time, we shall take care of the asymptotics of the solution when $\xi$ goes to $-\infty$. Because of the existence of solutions $\xi\mapsto B(\xi)$ of equation \cref{ode_B} that vanish at some point (see \cref{sec:vanishing}), these asymptotics are slightly more involved than at $+\infty$. 
\begin{lemma}[only solutions in the unstable manifold of $z_+(v)$ have a bounded amplitude]
\label{lem:unst_man}
Let $\xi\mapsto\bigl(a(\xi),\kappa(\xi),q(\xi)\bigr)$ denote a nonconstant solution of system \cref{syst_akq_om_1}, defined on a maximal existence interval $(\xi_{\min},\xi_{\max})$ --- with $-\infty\le \xi_{\min}<\xi_{\max}\le+\infty$ --- and such that the amplitude $\xi\mapsto a(\xi)$ is positive. Assume that $v$ is positive and that the amplitude $\xi\mapsto a(\xi)$ is bounded, that is:
\[
\sup_{\xi\in(\xi_{\min},\xi_{\max})} \abs{a(\xi)} < +\infty
\,.
\]
Then the amplitude $a(\xi)$ approaches zero when $\xi$ approaches $\xi_{\min}$ from the right, and one among the two following cases occurs.
\begin{enumerate}
\item The lower bound $\xi_{\min}$ is finite. In this case there exists a solution $\xi\mapsto B(\xi)$ of equation \cref{ode_B_om_1}, defined on an interval containing $[\xi_{\min},\xi_{\max})$, vanishing at $\xi_{\min}$, and corresponding to $\xi\mapsto\bigl(a(\xi),\kappa(\xi),q(\xi)\bigr)$ --- through the correspondence \cref{not_akq} --- on $(\xi_{\min},\xi_{\max})$.
\item The lower bound $\xi_{\min}$ equals $-\infty$. In this case the solution $\xi\mapsto\bigl(a(\xi),\kappa(\xi),q(\xi)\bigr)$ belongs to the unstable manifold of the equilibrium $z_+(v)$, in other words:
\[
\bigl(\kappa(\xi)+iq(\xi)\bigr) \rightarrow z_+(v)
\quad\mbox{when}\quad \xi\rightarrow -\infty
\,.
\]
\end{enumerate}
\end{lemma}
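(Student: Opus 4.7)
\emph{Monotonicity.} By \cref{lem:a_mon} assertion~2 (with $v>0$ and $a$ bounded), $a'>0$ strictly on $(\xi_{\min},\xi_{\max})$; hence $a$ is strictly increasing, with $a(\xi)\to a_{\min}\in[0,+\infty)$ as $\xi\to\xi_{\min}^+$.

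\emph{The case $\xi_{\min}$ finite.} If $a_{\min}>0$, Gronwall's inequality applied to \cref{ode_B_om_1} (using only the upper bound on $|B|$) gives $|B'|$ bounded on $[\xi_{\min},\xi_{\max})$, so $(B,B')$ extends across $\xi_{\min}$; since $B(\xi_{\min})\neq 0$, the polar triple $(a,\kappa,q)$ extends too, contradicting maximality. Hence $a_{\min}=0$, and the same Gronwall argument extends $(B,B')$ across $\xi_{\min}$ with $|B(\xi_{\min})|=a_{\min}=0$, giving $B(\xi_{\min})=0$: this is case~1.

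\emph{The case $\xi_{\min}=-\infty$: boundedness of $|z|$.} Writing
\[
\frac{d|z|^2}{d\eta}=2(v+\kappa)|z|^2-2q(1-a^2),\qquad \eta=-\xi,
\]
and using $\kappa>0$: suppose $|z(\xi_n)|\to\infty$ along some $\xi_n\to-\infty$. Either $\kappa(\xi_n)\gtrsim|z(\xi_n)|$, so that $2\kappa|z|^2\gtrsim|z|^3$ drives backward finite-time blow-up, forbidden by $\xi_{\min}=-\infty$; or $\kappa(\xi_n)=o(|z(\xi_n)|)$, so $|q(\xi_n)|\sim|z(\xi_n)|\to\infty$. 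In the latter, the $q$-equation has right-hand side of at most linear growth in $q$ while $\kappa$ stays small, so $|q|$ remains comparable to $|q(\xi_n)|$ on an interval $[\xi_n-\delta,\xi_n]$ of uniform length $\delta$; integrating $d\kappa/d\eta=v\kappa+\kappa^2-q^2$ over this interval gives $\kappa(\xi_n-\delta)\leq\kappa(\xi_n)-c\delta\, q(\xi_n)^2$, arbitrarily negative for large $n$, contradicting $\kappa>0$. Hence $(\kappa,q)$ is bounded on $(-\infty,\xi_{\max})$.

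\emph{The case $\xi_{\min}=-\infty$: $\alpha$-limit analysis.} The trajectory is now bounded backward; its $\alpha$-limit set is a nonempty compact invariant subset of $\{a=a_{\min}\}$. If $a_{\min}>0$, invariance combined with \cref{syst_akq_om_1} successively forces $\kappa\equiv 0$, $q\equiv 0$, and $a_{\min}=1$ on the $\alpha$-limit set, i.e., the limit set is $\{(1,0,0)\}$; but \cref{lem:om_pos} applied forward also gives $a\to 1$, and strict monotonicity then forces $a\equiv 1$, contradicting nonconstancy. Hence $a_{\min}=0$, and the $\alpha$-limit set lies in $\{a=0\}$. It cannot contain $z_-(v)$: near $z_-(v)$ the relation $a'\approx(\ree z_-(v))\,a<0$ would force $a$ to grow in backward time, violating its monotone decay to $0$. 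By the description of the bounded closed invariant subsets of $\{a=0\}$ in \cref{sec:a_equals_0}, the $\alpha$-limit set is $\{z_+(v)\}$, establishing case~2. The most delicate step is the backward boundedness of $|z|$ in the third paragraph, which combines the cubic blow-up mechanism with the sign constraint $\kappa>0$ inherited from \cref{lem:a_mon}.
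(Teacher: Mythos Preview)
Your overall architecture is sound and genuinely different from the paper's. Where the paper linearises the Riccati equation for $z$ by the M\"obius change of variables
\[
Z(\xi)=\frac{z(\xi)-\tilde z_+}{z(\xi)-\tilde z_-},
\qquad
\frac{dZ}{d\xi}=-(\tilde z_+-\tilde z_-)\,Z+o(1)\quad(\xi\to\xi_{\min}^+),
\]
(with $\tilde z_\pm$ the roots attached to the limit $a_{\min}$), you instead use a Gronwall argument on $(B,B')$ for the finite case, and a direct boundedness estimate followed by an $\alpha$-limit analysis for the infinite case. Your treatment of the finite case and the $\alpha$-limit argument (including the elimination of $a_{\min}>0$ via \cref{lem:om_pos} and strict monotonicity, and the exclusion of $z_-(v)$ via $\kappa>0$) are correct and pleasant alternatives; the paper obtains the same conclusions more directly from the asymptotics of $Z$.

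The real issue is your third paragraph, and you single it out yourself. The Case~B branch ($\kappa(\xi_n)=o(|z(\xi_n)|)$, hence $|q(\xi_n)|\to\infty$) is fine: backward in time $|q|$ can only grow once large (since $dq/d\eta=(v+2\kappa)q-(1-a^2)$ with $v+2\kappa>0$), while $d\kappa/d\eta\le v\kappa+\kappa^2-q(\xi_n)^2$ forces $\kappa$ through zero in time $O(\kappa(\xi_n)/q(\xi_n)^2)\to 0$. But Case~A is not justified: knowing $\kappa(\xi_n)\ge c\,|z(\xi_n)|$ \emph{at the points $\xi_n$} only gives $d|z|^2/d\eta\gtrsim |z|^3$ \emph{at those points}; to conclude finite-time blow-up you must show the comparison $\kappa\gtrsim|z|$ (or something equivalent) persists on an interval, and for small $c$ the naive differential inequalities for $\kappa$ and $|z|$ do not yield this. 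One clean repair, still in your spirit, is to pass to $w=1/z$: since $\kappa>0$ gives $\ree z>0$, we have $\ree w>0$; the equation becomes $dw/d\eta=-1-vw+i(1-a^2)w^2=-1+O(|w|)$, so once $|w|$ is small (equivalently $|z|$ large, which your exponential lower bound $d|z|/d\eta\ge v|z|-M$ already gives backward), $\ree w$ decreases at rate bounded away from zero and hits $0$ in finite backward time, contradicting $\kappa>0$. This replaces both Case~A and Case~B at once and is essentially the infinitesimal version of the paper's M\"obius trick. As written, though, your Case~A step is a gap.
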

According to the conclusions of \cref{lem:vanishing}, if $\xi_{\min}$ is finite (item 1 above) the solution under consideration is not relevant.  
\begin{proof}
According to \cref{lem:a_mon}, the amplitude $\xi\mapsto a(\xi)$ is strictly increasing on the interval $(\xi_{\min},\xi_{\max})$. Let $a_{-\infty}$ denote the limit of $a(\xi)$ when $\xi$ approaches $\xi_{\min}$. 

For all $\xi$ in $(\xi_{\min},\xi_{\max})$, let us write: $z(\xi)=\kappa(\xi) +i q(\xi)$. Since the frequency $\omega$ equals $1$, expression of $dz/d\xi$ in \cref{syst_az} becomes:
\begin{equation}
\label{dz_om_1}
dz/d\xi = i(1-a^2) - vz - z^2
\,.
\end{equation}
Let $\tilde\alpha+i\tilde\beta$ denote the square root of $v^2+4i(1-a_{-\infty}^2)$ with positive real part (thus both $\tilde\alpha$ and $\tilde\beta$ are positive), and let
\[
\tilde z_- = \frac{-v -\tilde\alpha-i\tilde\beta}{2}
\quad\mbox{and}\quad
\tilde z_+ = \frac{-v +\tilde\alpha+i\tilde\beta}{2}
\]
denote the two roots of the right-hand side of \cref{dz_om_1} where $a$ is replaced by $a_{-\infty}$. Since $\tilde\alpha^2 - \tilde\beta^2 = v^2$, these two roots have real parts of opposite signs, namely:
\[
\ree \tilde z_- < 0 < \ree \tilde z_+
\,.
\]
For all $\xi$ in $(\xi_{\min},\xi_{\max})$, let 
\begin{equation}
\label{z_Z_a_min_inf}
Z(\xi)=\frac{z(\xi)-\tilde z_+}{z(\xi)-\tilde z_-}
\Longleftrightarrow
z(\xi) = \frac{\tilde z_+ - \tilde z_- Z(\xi)}{1-Z(\xi)}
\end{equation}
(see \cref{fig:change_variables}).
\begin{figure}[!htbp]
	\centering
    \includegraphics[width=0.85\textwidth]{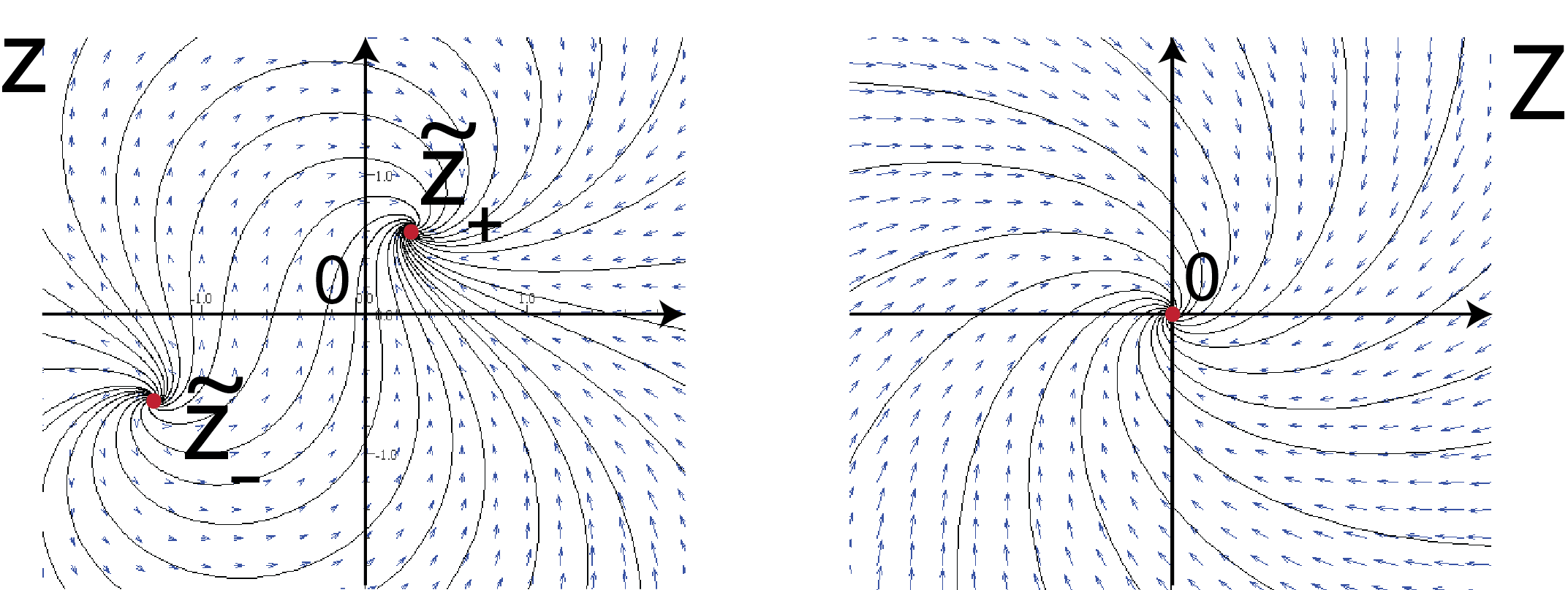}
    \caption{Change of variables \cref{z_Z_a_min_inf}).}
    \label{fig:change_variables}
\end{figure}

Observe that since the amplitude $a(\xi)$ is strictly increasing, the real part of $z(\xi)$ is positive, thus
\[
\abs{z(\xi)-\tilde z_-}\ge \abs{\ree \tilde z_-} >0
\quad\mbox{for all}\quad
\xi\mbox{ in } (\xi_{\min},\xi_{\max})
\,,
\]
so that in the previous change of variables $Z(\xi)$ is well-defined for all $\xi$ in $(\xi_{\min},\xi_{\max})$. According to \cref{dz_om_1}, 
\begin{equation}
\label{dZdxi_a_min_inf}
\begin{split}
dZ/d\xi &= -(\tilde z_+ - \tilde z_-)\,Z + i\bigl(a_{-\infty}^2-a(\xi)^2\bigr)\frac{\tilde z_+-\tilde z_-}{\bigl(z(\xi)-\tilde z_-\bigr)^2} \\
&= -(\tilde z_+ - \tilde z_-)\,Z + o(1)
\quad\mbox{as}\quad 
\xi\rightarrow\xi_{\min}
\quad\mbox{from the right.}
\end{split}
\end{equation}

First let us consider the case where $\xi_{\min}>-\infty$. Since the quantity $a(\xi)$ does not blow-up at $\xi_{\min}$, the quantity $\abs{z(\xi)}$ must approach $+\infty$ when $\xi$ approaches $\xi_{\min}$, thus the quantity $Z(\xi)$ must approach $1$ when $\xi$ approaches $\xi_{\min}$. It follows from \cref{dZdxi_a_min_inf} that
\[
Z(\xi)-1 \ \sim\ - (\tilde z_+ - \tilde z_-)\, (\xi - \xi_{\min})
\quad\mbox{as}\quad
\xi\rightarrow\xi_{\min}
\quad\mbox{from the right,}
\]
and thus from \cref{z_Z_a_min_inf} that
\[
z(\xi) \ \sim\ \frac{1}{\xi-\xi_{\min}}
\quad\mbox{as}\quad
\xi\rightarrow\xi_{\min}
\quad\mbox{from the right.}
\]
Thus, for every $\xi_0$ in $(\xi_{\min},\xi_{\max})$, the integral
\[
\int_{\xi}^{\xi_0}\ree z(\xi)\, d\xi
\]
approaches $-\infty$ as $\xi$ approaches $\xi_{\min}$ from the right, and from $da/d\xi=\bigl(\ree z(\xi)\bigr) a(\xi)$ it follows that $a_{-\infty}$ equals zero. To conclude let $\xi_0$ be a quantity in $(\xi_{\min},\xi_{\max})$ and, for every $\xi$ in $(\xi_{\min},\xi_{\max})$, let
\[
B(\xi) = a(\xi) \exp\Bigl(i\int_{\xi_0}^\xi q(\zeta)\, d\zeta\Bigr)
\,.
\]
This defines a solution of equation \cref{ode_B_om_1} defined on $(\xi_{\min},\xi_{\max})$ and such that $B(\xi)$ approaches zero as $\xi$ approaches $\xi_{\min}$ from the right. Since $\xi_{\min}$ is finite, $B(\cdot)$ must be the restriction to the interval $(\xi_{\min},\xi_{\max})$ of a solution of equation \cref{ode_B_om_1} that vanishes at $\xi_{\min}$. This finishes the proof in the case where $\xi_{\min}$ is finite. 

We are left with the case where $\xi_{\min}=-\infty$.
In this case, it follows from \cref{dZdxi_a_min_inf} that $\abs{Z(\xi)}$ approaches either zero or $+\infty$ when $\xi$ approaches $-\infty$. But since the real part of $z(\xi)$ remains positive, the second of these alternatives cannot occur. Thus $z(\xi)$ approaches $\tilde z_+$ when $\xi$ approaches $-\infty$. 

Thus $\kappa(\xi)$ approaches the positive quantity $\ree \tilde z_+$ when $\xi$ approaches $-\infty$, and from the expression of $da/d\xi$ it follows that $a_{-\infty}$ equals zero. This finishes the proof. 
\end{proof}
\cref{prop:only_ret_fr} follows from \cref{lem:a_mon,lem:vanishing,lem:om_pos,lem:unst_man}. 
\section{Unstable manifold of the zero amplitude equilibrium}
\label{sec:unst_man}
We still assume that $\omega$ equals $1$. Let $\xi\mapsto\bigl(a(\xi),\kappa(\xi),q(\xi)\bigr)$ denote a solution of system \cref{syst_akq_om_1} taking its values in the part of the unstable manifold of $z_+(v)$ situated in the open half-space $\{(a,\kappa,q)\in\rr^3 : a>0\}$, and defined on a maximal interval $(-\infty,\xi_{\max})$, with $\xi_{\max}\le+\infty$. 

According to \cref{lem:a_mon}, 
\begin{itemize}
\item the amplitude $\xi\mapsto a(\xi)$ is strictly increasing on $(-\infty,\xi_{\max})$,
\item and if $v$ is nonpositive, then $a(\xi)$ approaches $+\infty$ when $\xi$ approaches $\xi_{\max}$.
\end{itemize}
According to \cref{lem:om_pos}, if $v$ is positive, then:
\begin{itemize}
\item either $a(\xi)$ approaches $+\infty$ when $\xi$ approaches $\xi_{\max}$, 
\item or $\xi_{\max}$ equals $+\infty$ and the amplitude $a(\xi)$ approaches $1$ when $\xi$ approaches $\xi_{\max}$.
\end{itemize}
As a consequence, for every $v$ in $\rr$, there exists a unique solution 
\[
\xi\mapsto\bigl(a_v(\xi),\kappa_v(\xi),q_v(\xi)\bigr)
\]
of system \cref{syst_akq_om_1} taking its values in the part of the unstable manifold of $z_+(v)$ situated in the open half-space $\{(a,\kappa,q)\in\rr^3 : a>0\}$, defined on a maximal interval $\bigl(-\infty,\xi_{\max}(v)\bigr)$ containing $0$, and such that:
\[
a(0)=\frac{1}{2}
\,.
\]
According to the statements above and to the local study around the equilibrium $(1,0,0)$ (\cref{sec:neighb_equil}), one of the three (mutually exclusive) cases occurs for this solution.
\begin{enumerate}
\item The amplitude $a_v(\xi)$ approaches $+\infty$ when $\xi$ approaches $\xi_{\max}(v)$.
\item The solution is defined up to $+\infty$ and approaches the equilibrium $(1,0,0)$ through its stable manifold, when $\xi$ approaches $+\infty$.
\item The solution is defined up to $+\infty$ and approaches the equilibrium $(1,0,0)$ through its center manifold, when $\xi$ approaches $+\infty$.
\end{enumerate}
Let us denote by $\vinfty$, $\vstab$, and $\vcent$ the subsets of $\rr$ containing the values of $v$ corresponding respectively to these three scenarios. We are going to show that the conclusions of \cref{thm} hold with:
\begin{equation}
\label{not_trichot_alt}
\vnone = \vinfty
\quad\mbox{and}\quad
\vSteep = \vstab
\quad\mbox{and}\quad
\vGradual = \vcent
\,.
\end{equation}
(see \cref{fig:partition_velocity_line_bis}).
\begin{figure}[!htbp]
	\centering
    \includegraphics[width=0.9\textwidth]{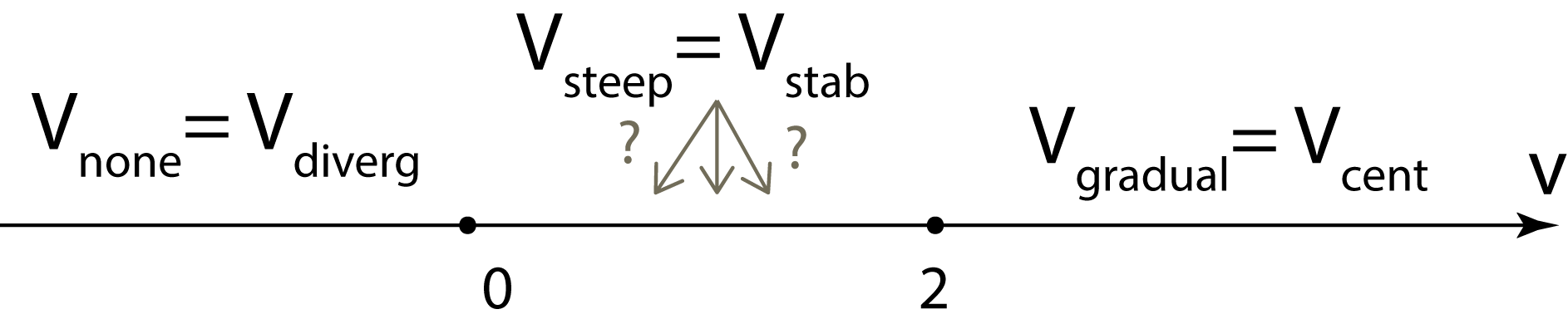}
    \caption{Partition of the velocity line.}
    \label{fig:partition_velocity_line_bis}
\end{figure}

We have
\[
\rr = \vinfty \sqcup \vstab \sqcup \vcent
\]
and, according to \cref{lem:a_mon} (or to the statements above),
\[
(-\infty,0]\subset \vinfty
\,.
\]
\begin{lemma}[the set $\vinfty$ is open]
The set $\vinfty$ is open in $\rr$. 
\end{lemma}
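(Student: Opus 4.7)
The plan is to exploit continuous dependence of the trajectory $(a_v,\kappa_v,q_v)$ on the parameter $v$, together with the monotonicity provided by \cref{lem:a_mon}.

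If $v_0 < 0$, a small neighborhood of $v_0$ is already contained in $(-\infty,0] \subset \vinfty$ and there is nothing to prove; thus assume $v_0 \geq 0$ and $v_0 \in \vinfty$. For every real $v$, the equilibrium $z_+(v)$ is a hyperbolic saddle-focus of system \cref{syst_akq_om_1}: stable within the invariant plane $\{a=0\}$ and unstable in the transverse $a$-direction (with unstable eigenvalue $\ree z_+(v) > 0$, see \cref{sec:a_equals_0}). By the invariant manifold theorem applied to this family, for a fixed small $\varepsilon > 0$ the unique point $p_\varepsilon(v)$ of $W^u\bigl(z_+(v)\bigr)$ close to $z_+(v)$ with $a$-coordinate equal to $\varepsilon$ depends smoothly on $v$ in a neighborhood of $v_0$. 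The solution $\xi \mapsto \bigl(a_v(\xi),\kappa_v(\xi),q_v(\xi)\bigr)$ is recovered by flowing forward from $p_\varepsilon(v)$ until the amplitude reaches $1/2$, then translating the time variable so that this occurs at $\xi = 0$. Strict monotonicity of $a_v$ (given by \cref{lem:a_mon} together with $a_v(\xi) \to 0$ as $\xi \to -\infty$) makes this reparameterization a continuous function of $v$ via the implicit function theorem, and the usual continuous dependence of the flow on parameters and initial conditions then yields: for every $\xi_* < \xi_{\max}(v_0)$, the value $a_v(\xi_*)$ is defined and depends continuously on $v$ in some neighborhood of $v_0$.

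Since $v_0 \in \vinfty$, we have $a_{v_0}(\xi) \to +\infty$ as $\xi \to \xi_{\max}(v_0)$, so there exists $\xi_1 \in \bigl(0,\xi_{\max}(v_0)\bigr)$ with $a_{v_0}(\xi_1) > 1$. By the preceding paragraph, there is a neighborhood $U$ of $v_0$ in $\rr$ such that $a_v(\xi_1)$ is defined and exceeds $1$ for every $v \in U$. Any $v \in U$ with $v \leq 0$ belongs to $\vinfty$ automatically (by $(-\infty,0] \subset \vinfty$). For $v \in U$ with $v > 0$, assume for contradiction that $v \in \vstab \cup \vcent$; then $a_v$ is defined and strictly increasing on all of $\rr$ (\cref{lem:a_mon}) and converges to $1$ as $\xi \to +\infty$, forcing $a_v(\xi_1) < 1$, which contradicts $a_v(\xi_1) > 1$. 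Hence $U \subset \vinfty$, proving openness.

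The only slightly delicate ingredient is the smooth dependence of $W^u\bigl(z_+(v)\bigr)$ on $v$, which is a standard consequence of the invariant manifold theorem for parameterized families of hyperbolic equilibria; the remainder of the argument is bookkeeping.
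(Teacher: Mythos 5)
Your proof is correct and follows essentially the same route as the paper's: the key observation that $a_v(\xi_1)>1$ at some $\xi_1$ forces $v\in\vinfty$ (since for $v\in\vstab\sqcup\vcent$ the amplitude is strictly increasing with limit $1$, hence stays below $1$), combined with continuity of the unstable manifold of $z_+(v)$ and of the flow with respect to $v$. You merely spell out the parameter-dependence and normalization details that the paper's two-line proof leaves implicit (and you correctly use $z_+(v)$ where the paper's proof has an apparent typo $z_-(v)$).
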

\begin{proof}
Observe that if $a_v(\xi)$ is larger than $1$ for a certain value of $\xi$, then $v$ must belong to $\vinfty$. The statement follows from the unstable manifold manifold theorem (continuity of the unstable manifold of $z_-(v)$ with respect to the parameter $v$).
\end{proof}
\begin{lemma}[the set $\vcent$ is open]
The set $\vcent$ is open in $\rr$. 
\end{lemma}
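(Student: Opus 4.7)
The plan is to combine the local description of the equilibrium $(1,0,0)$ given in \cref{sec:neighb_equil} with the continuous dependence on $v$ of both the unstable manifold of $z_+(v)$ and the local stable manifold of $(1,0,0)$.

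Fix $v_0\in\vcent$. First I would choose a small neighbourhood $U$ of $(1,0,0)$ in which, uniformly for $v$ close to $v_0$, the local stable manifold $W^s_{\mathrm{loc}}((1,0,0);v)$ is well defined, and such that the dichotomy described in \cref{sec:neighb_equil} applies: a solution of \cref{syst_akq_om_1} staying in $U\cap\{a<1\}$ for all large $\xi$ converges to $(1,0,0)$, either exponentially through $W^s_{\mathrm{loc}}$ or along a center manifold with the polynomial rate \cref{approach_cent}. Since $v_0\in\vcent$, there exists $\xi_1$ such that $p_0 := (a_{v_0}(\xi_1),\kappa_{v_0}(\xi_1),q_{v_0}(\xi_1))$ lies in $U$ with $a_{v_0}(\xi_1)<1$. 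By uniqueness of $W^s_{\mathrm{loc}}$, if $p_0$ belonged to $W^s_{\mathrm{loc}}((1,0,0);v_0)$ then the entire $v_0$-trajectory for $\xi\ge\xi_1$ would lie on it, contradicting $v_0\in\vcent$; hence $p_0$ sits at positive distance from $W^s_{\mathrm{loc}}((1,0,0);v_0)$.

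Next I would invoke two standard facts. The unstable manifold of the hyperbolic saddle-focus $z_+(v)$ depends continuously on $v$, so the normalisation $a_v(0)=1/2$ selects a continuous family of initial conditions, and hence the trajectory $\xi\mapsto(a_v(\xi),\kappa_v(\xi),q_v(\xi))$ depends continuously on $v$ on the compact interval $[0,\xi_1]$. Moreover, the local stable manifold $W^s_{\mathrm{loc}}((1,0,0);v)$ depends continuously on $v$ in the $C^0$ topology. Combining these, for $v$ sufficiently close to $v_0$ the point $(a_v(\xi_1),\kappa_v(\xi_1),q_v(\xi_1))$ still lies in $U$ with $a_v(\xi_1)<1$ and at positive distance from $W^s_{\mathrm{loc}}((1,0,0);v)$. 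By the dichotomy above, its forward trajectory converges to $(1,0,0)$ along a center manifold, which gives $v\in\vcent$ as required.

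The step I expect to carry most of the weight is the transversality argument that excludes $v\in\vstab$ for nearby $v$. It rests on $C^0$-continuity of the local stable manifold with respect to the parameter $v$ (a classical consequence of the stable manifold theorem with parameters) together with the clean dichotomy ``on $W^s_{\mathrm{loc}}$ versus asymptotic to a center manifold'' inside $U\cap\{a<1\}$. The latter deserves care because the center manifold is non-unique on this side; the point is nevertheless that $W^s_{\mathrm{loc}}$ is itself a well-defined codimension-one submanifold, and any trajectory trapped in $U\cap\{a<1\}$ which misses it must track the reduced one-dimensional center dynamics and therefore satisfy \cref{approach_cent}.
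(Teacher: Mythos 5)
Your strategy -- continuity in $v$ of the unstable manifold of $z_+(v)$ together with the local invariant structure at $(1,0,0)$ -- is the same as the paper's, but the decisive step has a genuine gap. The dichotomy you invoke is conditional: it applies to trajectories that remain in $U\cap\{a<1\}$ for all large $\xi$, and for the perturbed trajectory you verify membership in $U\cap\{a<1\}$ only at the single time $\xi_1$. Nothing in your argument prevents that trajectory from leaving $U$, crossing the level $a=1$ (recall that $a_v$ is strictly increasing), and blowing up; in other words you never exclude $v\in\vinfty$, which is the third branch of the trichotomy. Moreover ``at positive distance from $W^s_{\mathrm{loc}}((1,0,0);v)$'' is not the relevant condition: inside $U\cap\{a<1\}$ there are points far from $W^s_{\mathrm{loc}}$ that lie on the \emph{repelling} side of the center direction (in the basis \cref{lin} the center coordinate is essentially $\bigl(v(a-1)+\kappa\bigr)/v^2$, and it is positive as soon as $\kappa>v(1-a)$, even with $a<1$); their forward orbits track the center dynamics away from the equilibrium, cross $a=1$ and escape. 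So a point can satisfy everything you check at time $\xi_1$ and still correspond to $v\in\vinfty$; the criterion as stated cannot separate $\vcent$ from $\vinfty$, and your closing paragraph, which presents the exclusion of $\vstab$ as the main burden, misses that the exclusion of $\vinfty$ is the half of the statement that actually uses the local center dynamics.

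The missing ingredient is a trapping statement on the attracting side, robust in $v$. Since $v_0\in\vcent$, the orbit approaches $(1,0,0)$ along the center direction from the side $a<1$, so for $\xi_1$ large the center coordinate of $p_0$ is negative and dominates the stable coordinates (on the center manifold $\kappa\approx\kappa_2(1-a)^2\ll 1-a$, hence $v(a_{v_0}(\xi_1)-1)+\kappa_{v_0}(\xi_1)<0$), and $p_0\notin W^s_{\mathrm{loc}}$. Both properties persist under small changes of the point and of $v$. One must then use the center-manifold reduction / strong stable foliation near $(1,0,0)$ (which depends continuously on $v$), or an explicitly constructed positively invariant region on that side of $W^s_{\mathrm{loc}}$, to conclude that the perturbed orbit stays in $U\cap\{a<1\}$ and converges to $(1,0,0)$; once trapping is established, being off $W^s_{\mathrm{loc}}$ (together with the characterization of $W^s_{\mathrm{loc}}$ as the points of $U$ whose forward orbit stays in $U$ and converges exponentially) forces the convergence to occur through a center manifold, excluding $\vstab$ and $\vinfty$ simultaneously and giving $v\in\vcent$. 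With that addition your argument becomes a correct fleshing-out of the paper's one-line proof; without it, the step ``in $U$, $a<1$, off $W^s_{\mathrm{loc}}$ at one time $\Rightarrow$ convergence through a center manifold'' is simply not valid.
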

\begin{proof}
This statement follows from the continuity of the unstable manifold of $z_-(v)$ and of the (local) stable manifold of $(1,0,0)$ with respect to the parameter $v$.
\end{proof}
\begin{lemma}[the set $\vcent$ contains the interval $[2,+\infty)$]
Every quantity $v$ that is not smaller than $2$ belongs to $\vcent$.
\end{lemma}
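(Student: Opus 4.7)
Fix $v\geq 2$ and let $(a_v,\kappa_v,q_v)$ denote the trajectory on the branch of the unstable manifold of $z_+(v)$ contained in $\{a>0\}$. The plan is to construct a positively invariant tubular neighbourhood $\Omega_v\subset\{0<a<1\}$ of the leading-order center manifold of $(1,0,0)$, show the trajectory is trapped in $\Omega_v$ for all $\xi$, and deduce convergence to $(1,0,0)$ at polynomial rate (hence along the center manifold).

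First I would confine the trajectory to the open cone $C=\{0<a<1,\,\kappa>0,\,q>0\}$. The unstable eigenvector at $z_+(v)$ is the $a$-axis, while $(\ree z_+(v),\imm z_+(v))=((\alpha-v)/2,\beta/2)$ lies in the open positive quadrant (since squaring $\alpha+i\beta=\sqrt{v^2+4i}$ gives $\alpha^2(\alpha^2-v^2)=4$, so $\alpha>v$). A boundary inspection ($\kappa'|_{\kappa=0}=q^2\geq 0$ and $q'|_{q=0,\,a<1}=1-a^2>0$) shows the trajectory can only exit $C$ through the face $a=1$.

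The heart of the argument is the construction of the tube
\[
\Omega_v=\bigl\{(a,\kappa,q)\in C:\,q\leq 2(1-a)/v,\ \kappa\leq K(1-a)^2\bigr\},
\]
with $K=K(v)>0$ matched to the leading-order center-manifold envelopes. A direct computation on the two outer faces gives the outward normal derivatives
\[
\bigl[q-2(1-a)/v\bigr]'\big|_{q=2(1-a)/v}=-(1-a)^2+\tfrac{2\kappa}{v}(3a-2)
\]
and, using $q\leq 2(1-a)/v$ on $\kappa=K(1-a)^2$, an upper bound of
\[
\bigl[\kappa-K(1-a)^2\bigr]'\big|_{\kappa=K(1-a)^2}\leq(1-a)^2\bigl[4/v^2-vK+K^2(1-a)(3a-1)\bigr].
\]
Since $\max_{[0,1]}(1-a)(3a-1)=1/3$, positive invariance of $\Omega_v$ reduces to the compatible polynomial constraints $K\leq v/2$ and $K^2/3-vK+4/v^2\leq 0$, which admit a common $K$ precisely when $v^4\geq 48/5$, in particular for $v\geq 2$. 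The trajectory's $-\infty$-limit $(0,(\alpha-v)/2,1/\alpha)$ lies in $\overline{\Omega_v}$ (the identity $\alpha^2(\alpha^2-v^2)=4$ gives $1/\alpha\leq 1/v\leq 2/v$ and $(\alpha-v)/2=2/[\alpha^2(\alpha+v)]\leq 1/v^3\leq K$), so the trajectory is trapped in $\Omega_v$ for all $\xi$. Integrating $a_v'=\kappa_v a_v\leq K(1-a_v)^2$ gives $1-a_v(\xi)\gtrsim 1/\xi$ for large $\xi$, hence $a_v<1$ throughout, and \cref{lem:om_pos} yields $a_v(\xi)\to 1$ as $\xi\to+\infty$.

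Finally, to rule out the stable-manifold alternative I would deduce a matching lower bound $q_v(\xi)\geq c\,(1-a_v(\xi))$ from the companion inequality $q'+vq\geq 1-a^2-2q\kappa$ together with the upper bound on $\kappa$ inside $\Omega_v$; combined with $1-a_v\gtrsim 1/\xi$ this gives $\int_0^{+\infty}q_v\,d\xi=+\infty$, incompatible with \cref{tot_phase_stab}. Hence $v\in\vcent$. The main obstacle I foresee is the tube construction and the verification of positive invariance on both outer faces; the threshold $v\geq 2$ is a convenient (non-sharp) sufficient condition under which the polynomial inequalities above close simultaneously.
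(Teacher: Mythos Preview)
Your approach is correct and complete in its essentials, but it takes a genuinely different route from the paper's own proof.

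The paper uses a single, much simpler trapping region: the solid cone
\[
\ccc=\bigl\{(a,z)\in\rr\times\cc:\ 0<a<1,\ \abs{z}<1-a\bigr\},
\]
governed by the scalar function $\qqq(a,z)=\tfrac12\bigl(\abs{z}^2-(1-a)^2\bigr)$. A one-line computation along solutions of \cref{syst_az_om_1} gives, on the boundary $\qqq=0$,
\[
\frac{d}{d\xi}\qqq\bigl(a,z\bigr)<2(1-a)\,\imm z-v\abs{z}^2\le(2-v)\abs{z}^2,
\]
so $v\ge2$ makes $\ccc$ positively invariant; since $\abs{z_+(v)}<1$ the unstable manifold enters $\ccc$, and the stable-manifold approach (tangent to $(0,0,1)$, hence with $q/(1-a)\to\infty$) is incompatible with $\abs{z}<1-a$. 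That is the whole proof.

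Your tube $\Omega_v=\{q\le 2(1-a)/v,\ \kappa\le K(1-a)^2\}$ is finer: it hugs the center manifold to second order in $1-a$ (the $\kappa$-face) rather than first order as $\ccc$ does. This costs you two separate boundary computations and a compatibility condition on $K$, but it buys something the paper's cone does not: the quantitative bound $a'\le K(1-a)^2$ yields directly $1-a_v(\xi)\gtrsim 1/\xi$, and the companion lower barrier $q\ge c(1-a)$ (which does work, e.g.\ for any $0<c<1/(v+2K)$) then gives $\int q_v=\infty$ without appealing to the abstract center/stable dichotomy at all. So your argument is more self-contained at the cost of more bookkeeping, while the paper's is shorter but leans on the local normal-form picture at $(1,0,0)$. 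Note, incidentally, that once you have positive invariance of $\Omega_v$ you can also conclude as the paper does: any stable-manifold trajectory satisfies $\kappa/(1-a)^2\to\infty$ (from the Jordan block at eigenvalue $-v$), which already contradicts $\kappa\le K(1-a)^2$; the extra lower-bound face on $q$ is then unnecessary.
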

\begin{proof}
Let us consider the function
\[
\qqq:\rr\times\cc\rightarrow\rr, 
\quad
(a,z)\mapsto \frac{\abs{z}^2-(1-a)^2}{2}
\]
and the set 
\[
\ccc = \{ (a,z)\in\rr\times\cc : 0<a<1 \mbox{ and } \qqq(a,z)<0 \}
\,.
\]
This set is the right circular open solid cone of apex $(1,0_{\cc})$ and base the disc of center the origin and radius $1$ in the plane $\{(a,z)\in\rr\times\cc:a=0\}$ (see \cref{fig:attractive_cone}).
\begin{figure}[!htbp]
	\centering
    \includegraphics[width=0.5\textwidth]{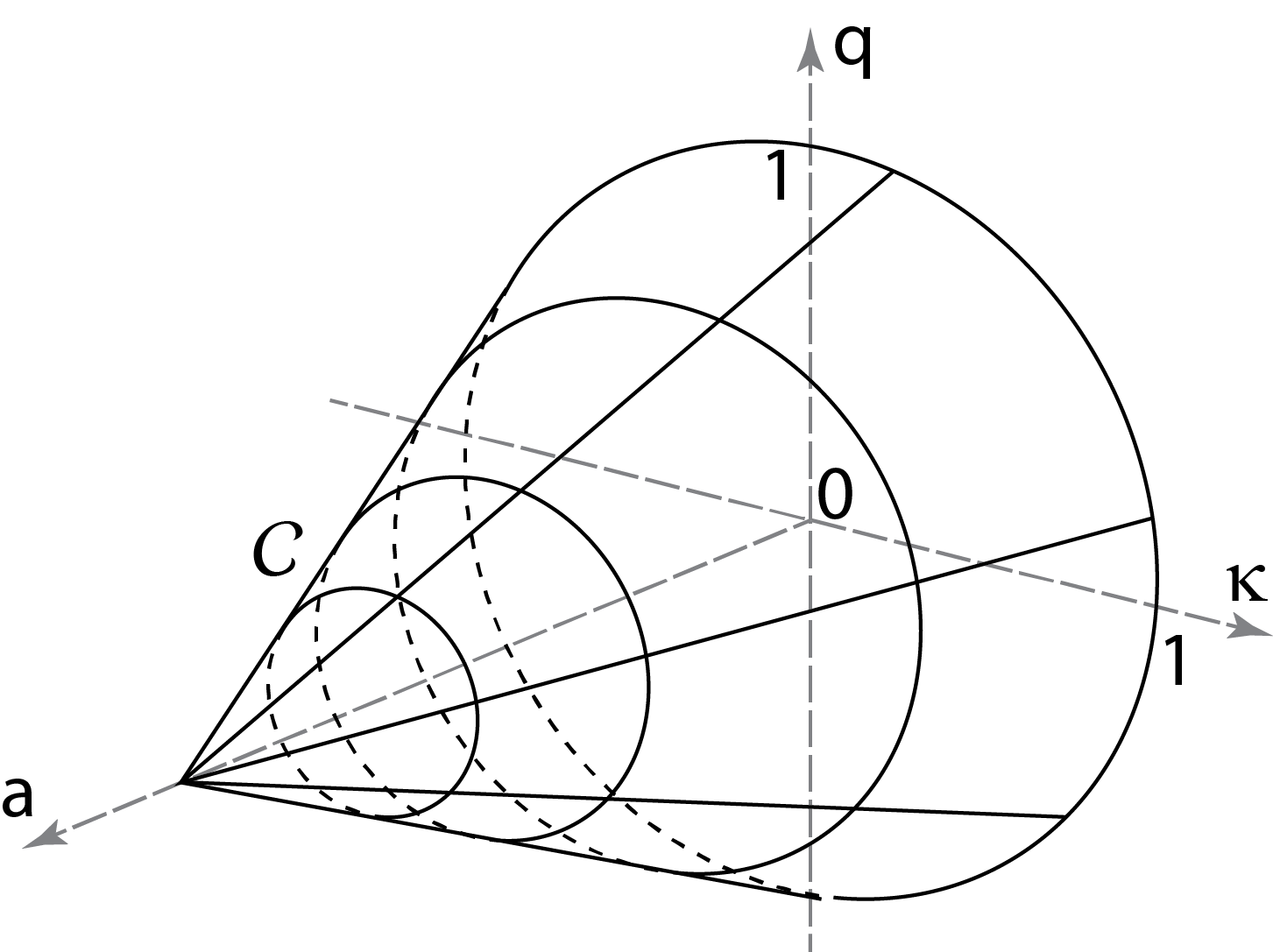}
    \caption{The cone $\ccc$ (positively invariant for the flow of system \cref{syst_az_om_1} as soon as $v$ is not smaller than $2$).}
    \label{fig:attractive_cone}
\end{figure}

Assume that the velocity $v$ positive. Then, according to \cref{abs_z_plus}, $\abs{z_+(v)}$ is smaller than $1$, thus $\qqq\bigl(0,z_+(v)\bigr)$ is negative, and as a consequence, if we write 
\[
z_v(\xi)=\kappa_v(\xi)+iq_v(\xi) 
\quad\mbox{for all}\quad
\xi\mbox{ in }\bigl(-\infty,\xi_{max}(v)\bigr)
\,,
\]
then $\bigl(a_v(\xi),z_v(\xi)\bigr)$ belongs to $\ccc$ for $\xi$ large negative. 

Let us assume furthermore that $v$ is not smaller than $2$. We are going to prove that, $\bigl(a_v(\xi),z_v(\xi)\bigr)$ actually remains in $\ccc$ (in other words that $\qqq\bigl(a_v(\xi),z_v(\xi)\bigr)$ remains negative) for all $\xi$ in $\bigl(-\infty,\xi_{\max}(v)\bigr)$. 

Let us proceed by contradiction and assume that there exists $\xi_0$ in $\bigl(-\infty,\xi_{\max}(v)\bigr)$ such that 
\begin{equation}
\label{escape_cone}
\qqq\bigl(a_v(\xi_0),z_v(\xi_0)\bigr)=0
\quad\mbox{and}\quad 
\qqq\bigl(a_v(\xi),z_v(\xi)\bigr)<0
\quad\mbox{for all}\quad
\xi\in(-\infty,\xi_0)
\,.
\end{equation}
For every solution $\xi\mapsto\bigl(a(\xi),z(\xi)\bigr)$ of system \cref{syst_az_om_1}, a direct computation gives
\[
\frac{d}{d\xi}Q\bigl(a(\xi),z(\xi)\bigr) = (1-a^2) \imm z - v\abs{z}^2  - \abs{z}^2 \ree z - a (1-a) \ree z
\]
thus, since $\ree z_v(\xi)$ takes solely positive values,
\[
\frac{d}{d\xi}_{|\xi=\xi_0} Q\bigl(a_v(\xi),z_v(\xi)\bigr) < 2 \bigl(1-a_v(\xi_0)\bigr)\imm z_v(\xi_0)- v\abs{z_v(\xi_0)}^2
\]
and since (according to \cref{escape_cone}) $1-a_v(\xi_0)=\abs{z_v(\xi_0)}$, this yields
\[
\frac{d}{d\xi}_{|\xi=\xi_0} Q\bigl(a_v(\xi),z_v(\xi)\bigr) < (2-v)\abs{z_v(\xi_0)}^2\le 0
\,,
\]
a contradiction with \cref{escape_cone}.

As a consequence (still assuming that $v$ is not smaller than $2$), the quantity $\xi_{\max}(v)$ equals $+\infty$ and $\bigl(a_v(\xi),\kappa_v)\xi),q_v(\xi)\bigr)$ approaches $(1,0,0)$ when $\xi$ approaches $+\infty$. But this approach cannot occur through the stable manifold of $(1,0,0)$ or else, according to the expression \cref{lin} of the linearization of the system at $(1,0,0)$, it would occur tangentially to the direction of vector $(0,0,1)$, which is impossible while remaining in $\ccc$. Thus $v$ belongs to $\vcent$. 
\end{proof}
\begin{lemma}[for every retracting front of amplitude $1$ the functions $a_v(\cdot)$ and $\kappa_v(\cdot)$ and $q_v(\cdot)$ are positive]
\label{lem:q_pos}
For all $v$ in $\vstab\sqcup\vcent$, the trajectory of the solution $\xi\mapsto\bigl(a_v(\xi),\kappa_v(\xi),q_v(\xi)\bigr)$ belongs to the octant:
\[
\{(a,\kappa,q)\in\rr^3 : a>0 \mbox{ and }\kappa>0 \mbox{ and } q>0 \}
\,.
\]
\end{lemma}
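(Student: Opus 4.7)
\textbf{Proof plan for \cref{lem:q_pos}.}

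The plan is to handle the three signs separately, the positivity of $a_v$ being built into the setup, the positivity of $\kappa_v$ following directly from \cref{lem:a_mon}, and the positivity of $q_v$ being the only nontrivial point, which I would prove by a first--zero contradiction using the $q$-equation of system \cref{syst_akq_om_1}. First, note that $v$ is necessarily positive, since $(-\infty,0]\subset\vinfty$ and $\vstab,\vcent$ are disjoint from $\vinfty$. Moreover, by construction the trajectory lies in the unstable manifold of $z_+(v)$, so
\[
\bigl(a_v(\xi),\kappa_v(\xi),q_v(\xi)\bigr)\longrightarrow \bigl(0,\ree z_+(v),\imm z_+(v)\bigr)
\quad\mbox{as}\quad
\xi\rightarrow -\infty,
\]
and both $\ree z_+(v)$ and $\imm z_+(v)$ are positive (\cref{sec:a_equals_0}). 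In particular $\kappa_v$ and $q_v$ are positive on some half-line $(-\infty,\xi_*)$.

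For $\kappa_v$: the amplitude $a_v$ is bounded (it converges to $1$ at $+\infty$ since $v\in\vstab\sqcup\vcent$), hence by \cref{lem:a_mon} applied with $v\ge 0$, the derivative $a_v'$ cannot be $\le 0$ anywhere (otherwise the amplitude would approach $+\infty$ at $\xi_{\min}=-\infty$). Therefore $a_v'(\xi)>0$ for all $\xi$, which gives $\kappa_v(\xi)=a_v'(\xi)/a_v(\xi)>0$ throughout.

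For $q_v$, which is the only delicate point: assume for contradiction that the set $\{\xi:q_v(\xi)\le 0\}$ is non-empty, and let $\xi_0$ denote its infimum. Since $q_v(\xi)\to\imm z_+(v)>0$ as $\xi\to-\infty$, this infimum is finite; by continuity $q_v(\xi_0)=0$ and $q_v(\xi)>0$ for $\xi<\xi_0$, so $q_v'(\xi_0)\le 0$. On the other hand the third equation of \cref{syst_akq_om_1} evaluated at $\xi_0$ yields
\[
q_v'(\xi_0)=1-a_v(\xi_0)^2-v\,q_v(\xi_0)-2\,q_v(\xi_0)\kappa_v(\xi_0)=1-a_v(\xi_0)^2.
\]
Since $a_v$ is strictly increasing on $\rr$ with limit $1$ at $+\infty$, we have $a_v(\xi_0)<1$, so $q_v'(\xi_0)>0$, contradicting $q_v'(\xi_0)\le 0$. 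Hence $q_v$ stays positive on $\rr$, which concludes the proof.

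I do not expect any real obstacle here: the only ingredient beyond the already-established monotonicity of $a_v$ and its strict inequality $a_v<1$ is the observation that $1-a^2$ is precisely the reaction term making the $q$-axis repelling from the side $q>0$ as long as $a<1$.
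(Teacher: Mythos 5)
Your proposal is correct and follows essentially the same route as the paper: positivity of $a_v$ and $\kappa_v$ from the monotonicity result (\cref{lem:a_mon}) and the convergence to $z_+(v)$ at $-\infty$, then positivity of $q_v$ via the sign of $dq/d\xi=1-a^2$ at a hypothetical first zero, using $a_v<1$. The paper's proof is simply a terser version of the same first-zero argument, so no changes are needed.
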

\begin{proof}
We already know that $a_v(\xi)$ and $\kappa_v(\xi)$ remain positive for all $\xi$ in $\rr$. Since $a_v(\xi)$ remains smaller than $1$ and since $q_v(\xi)$ is positive for $\xi$ large negative, the expression of $dq/d\xi$ in \cref{syst_az_om_1} shows that $q_v(\xi)$ actually remains positive for all $\xi$ in $\rr$. 
\end{proof}
The monotonicity of the phase (\vref{prop:monotonic_phase}) follows from this lemma. The remaining assertions of \cref{thm} follow from the asymptotics \cref{approach_stab}, \cref{tot_phase_stab}, \cref{approach_cent}, and \cref{tot_phase_center} about the approach to $(1,0,0)$ through its stable or center-stable manifold. \Cref{thm} is proved.

\subsubsection*{Acknowledgements} 
We thank Pierre Coullet and Lorenz Kramer for explaining us their work and the arisen questions. The numerical computations (\cref{fig:shape_front} and approximate velocity of the front) were performed by Pierre Coullet using the NLKit software developed at the Institut Non Linéaire de Nice. 

\printbibliography 


\end{document}